\def\newaliasedtheorem#1[#2]#3{
	\newaliascnt{#1@alt}{#2}
	\newtheorem{#1}[#1@alt]{#3}
	\expandafter\newcommand\csname #1@altname\endcsname{#3}
}
\theoremstyle{plain}
\newtheorem{theorem}{Theorem}[section]
\theoremstyle{definition}
\theoremstyle{remark}
\newcommand{\R}{\mathbb{R}}
\newcommand{\W}{\mathbb{W}}
\newcommand{\HH}{\mathbb{H}}
\newcommand{\G}{\mathbb{G}}
\newcommand{\sW}{\mathbb{W}}
\newcommand{\sL}{\mathbb{L}}
\newcommand{\eps}{\varepsilon}
\let\altphi\phi
\let\phi\varphi
\let\varphi\altphi
\let\altphi\undefined
\DeclareMathOperator{\de}{\mathrm d\!}
\newcommand{\negphantom}[1]{\settowidth{\dimen0}{#1}\hspace*{-\dimen0}}
\newcommand{\average}{{\mathchoice {\kern1ex\vcenter{\hrule height.4pt
width 6pt
depth0pt} \kern-9.7pt} {\kern1ex\vcenter{\hrule height.4pt width 4.3pt
depth0pt}
\kern-7pt} {} {} }}
\renewcommand{\epsilon}{\varepsilon}
\address{\textsc{Gioacchino Antonelli}: 
	Scuola Normale Superiore, Piazza dei Cavalieri, 7, 56126 Pisa, Italy.}
\email{gioacchino.antonelli@sns.it}
\address{\textsc{Daniela Di Donato}: 
Department of Mathematics and Statistics, P.O.\ Box 35 (MaD), FI--40014, University of Jyv\"askyl\"a, Finland.}
\email{daniela.d.didonato@jyu.fi}
\address{\textsc{Sebastiano Don}:
Department of Mathematics and Statistics, P.O.\ Box 35 (MaD), FI--40014, University of Jyv\"askyl\"a, Finland.}
\email{sedon@jyu.fi}
\subjclass[]{ 
	53C17, 
	22E25, 
	28A75,  
	35B65, 
	49Q15, 
	26A16,  
	58J60, 
	35F20, 
	35F50. 
	}
\keywords{Carnot groups, step-2 Carnot groups, intrinsically $C^1$-surfaces, broad solutions, Burgers' equation, distributional solutions to non-linear first order PDEs.}
\thanks{
	D.D.D., S.D. are partially supported by the Academy of Finland (grant
	288501
	`\emph{Geometry of subRiemannian groups}' and by grant
	322898
	`\emph{Sub-Riemannian Geometry via Metric-geometry and Lie-group Theory}'). G.A., D.D.D., S.D. are partially supported by the European Research Council
	(ERC Starting Grant 713998 GeoMeG `\emph{Geometry of Metric Groups}').
}
\title{Distributional solutions of Burgers' type equations for intrinsic graphs in Carnot groups of step 2}
\date{\today}
\author{Gioacchino Antonelli, Daniela Di Donato and Sebastiano Don}
\begin{document}
	\begin{abstract}
		We prove that in arbitrary Carnot groups $\mathbb G$ of step 2, with a splitting $\mathbb G=\mathbb W\cdot\mathbb L$ with $\mathbb L$ one-dimensional, the graph of a continuous function $\phi\colon U\subseteq \mathbb W\to \mathbb L$ is $C^1_{\mathrm{H}}$-regular precisely when $\phi$ satisfies, in the distributional sense, a Burgers' type system $D^{\phi}\phi=\omega$, with a continuous $\omega$. We stress that this equivalence does not hold already in the easiest step-3 Carnot group, namely the Engel group. 
		
		As a tool for the proof we show that a continuous distributional solution $\phi$ to a Burgers' type system $D^{\phi}\phi=\omega$, with $\omega$ continuous, is actually a broad solution to $D^{\phi}\phi=\omega$. As a by-product of independent interest we obtain that all the continuous distributional solutions to $D^{\phi}\phi=\omega$, with $\omega$ continuous, enjoy $1/2$-little H\"older regularity along vertical directions. 
	\end{abstract}
	\maketitle 
	\tableofcontents

\section{Introduction}

Due to the multitude of applications, sub-Riemannian geometry has attracted a lot of attention in the mathematical community in the recent years.
A sub-Riemannian manifold is a generalization of Riemannian manifold for which the metric is induced by a smooth scalar product only defined on a sub-bundle of the tangent bundle. The infinitesimal model of a sub-Riemannian manifold, namely the class of its Gromov-Hausdorff tangents, is represented by the class of (quotients of) Carnot groups \cite{SC16, LeDonne}. Carnot groups are connected and simply connected Lie groups $\G$ whose Lie algebra $\mathfrak g$ admits a stratification, namely a decomposition into nontrivial complementary linear subspaces $V_1,\dots, V_s$ such that
\[
\mathfrak g=V_1\oplus\dots\oplus V_s,\quad [V_j,V_1]=V_{j+1},\quad \text{for } j=1,\dots, s-1,\quad [V_s,V_1]=\{0\},
\] 
where $[V_j,V_1]$ denotes the subspace of ${\mathfrak{g}}$ generated by
the commutators $[X,Y]$ with $X\in V_j$ and $Y\in V_1$.
Carnot groups have been studied from very different point of views such as Differential Geometry \cite{CDPT}, 
Subelliptic Differential Equations \cite{BLU, Folland, Folland2, Sanchez-Calle}, Complex Analysis \cite{stein} and Neuroimaging \cite{CitManSar}.
 
Concerning Geometric Measure Theory in the setting of Carnot groups, one of the most studied problems in the past twenty years is represented by the {rectifiability problem}: is it possible to cover the boundary of a finite perimeter set with a countable union of $C^1$-regular surfaces? The answer to this question is affirmative in the Euclidean case and it was studied in \cite{DeGiorgi54, DeGiorgi55} via a blow-up analysis. The proof of De Giorgi has then been adapted in the framework of step-2 Carnot groups in \cite{FSSCHeis,FSSC03} and then generalized to the so-called Carnot groups of type $\star$ in \cite{Marchi}, see also the recent \cite{LDM}.
When dealing with Carnot groups of step 3 or higher, only partial results concerning this question are available in the literature. One of the main difficulty is represented by the fact that it is not known in general if $C^1$ rectifiability is equivalent to a Lipschitz-type rectifiability. Concerning Heisenberg groups, see \cite{Vittone20} for a Rademacher-type theorem for intrinsic Lipschitz graphs of any codimension. Different notions of rectifiability have also been recently investigated, see \cite{ALD, DLDMV}.

The rectifiability problem represents an example that underlines the importance of a fine understanding of intrinsic surfaces inside Carnot groups. The study of different notions of surfaces in Carnot groups has been quite extensive in the recent years and we mention \cite{FSSC07} for a definition of regular submanifold in the Heisenberg groups, \cite{FMS14, FS16} for intrinsic Lipschitz graphs and their connection to $C^1$-hypersurfaces, \cite{Magnani18} for a notion of non-horizontal transversal submanifold and \cite{Mag13, JNGV} for a notion of $C^1$-surface with Carnot group target, but the list is far from being complete.

We focus our attention on codimension-one intrinsic graphs. A codimension-one intrinsic graph $\Gamma$ inside a Carnot group $\G$ comes with a couple of homogeneous and complementary subgroups $\sW$ and $\sL$ with $\mathbb L$ one-dimensional, see \cref{sec:preliminari}, and a map $\phi\colon U\subseteq \sW\to \sL$ such that $\Gamma=\{x\in \G: x=w\cdot\phi(w), w\in U \}$.
 It turns out that the regularity of the graph $\Gamma$ is strictly related to the regularity of $\phi$ and its {intrinsic gradient} $\nabla^\phi\phi$, see \cref{sec:preliminari}. 
 As a geometric pointwise approach, we just say that $\phi$ is intrinsically differentiable if its graph has a homogeneous subgroup as blow-up. However, one can define some different notions of regularity that rely on some
  $\phi$-dependent operators $D^\phi_{W}$ whenever $W\in {\rm Lie}(\sW)$, see \cref{def:PhiJ}. If an adapted basis of the Lie algebra $(X_1,\dots, X_n)$ is fixed and is such that  $\sL\coloneqq \exp({\rm span}\{X_1\})$ and $\sW\coloneqq \exp({\rm span}\{X_2,\dots, X_n\})$, then $D^\phi$ is the vector valued operator $(D_{X_2}^\phi,\dots, D_{X_m}^\phi)\eqqcolon(D^\phi_2,\dots, D^\phi_m)$.
  The regularity of $\Gamma$ is related to the validity of the equation $D^\phi\phi=\omega$ in an open subset $U\subseteq \sW$, for some $\omega\colon U\to \R^{m-1}$, which can be understood in different ways.  We briefly present some of them here.

 \begin{itemize}
 	\item[] {\em Distributional sense}. Since $\sL$ is one-dimensional, $D^\phi\phi$ is a well-defined distribution, see the last part of \cref{defbroad*}. Thus we could interpret $D^{\phi}\phi=\omega$ in the distributional sense.
 	\item[] {\em Broad* sense}. For every $j=2,\dots, m$ and every point  $a\in U$, there exists a $C^1$ integral curve of $D^\phi_{X_j}$ starting from $a$ for which the Fundamental Theorem of Calculus with derivative $\omega$ holds, see \cref{defbroad*}.
 	\item[] {\em Broad sense}. For every $j=2,\dots, m$ and every point $a\in U$, all the integral curves of $D^\phi_{X_j}$ starting from $a$ are such that the Fundamental Theorem of Calculus with derivative $\omega$ holds, see \cref{defbroad*}.
 	\item[] {\em Approximate sense}. For every $a\in U$, there exist $\delta>0$ and a family $\{\phi_\epsilon\in C^1(B(a,\delta)): \epsilon\in (0,1)\}$ such that $\phi_\epsilon\to \phi$ and $D^{\phi_\epsilon}_{j}\phi_{\epsilon}\to \omega_j$ uniformly on $B(a,\delta)$ as $\epsilon$ goes to zero.
 \end{itemize}
 
 When $\G$ has step 2 and $\sL$ is one-dimensional, the following theorem holds, see \cite[Theorem 6.17]{ADDDLD} for a proof and \cite[Theorem 1.7]{ADDDLD} for an equivalent and coordinate-independent statement. Notice that the statement of the result below needs a choice of coordinates as explained in \cref{sub:step2}, see also \eqref{5.2.0.1}. We also refer the reader to the preliminary section of \cite{ADDDLD} for the notion used in the statement below that are not treated in the current paper.
\begin{theorem}[{\cite[Theorem 6.17]{ADDDLD}}]\label{thm:MainTheoremADDDLD}
	Let $\mathbb{G}$ be a Carnot group of step 2 and rank $m$, and let $\mathbb W$ and $\mathbb L$ be two complementary subgroups of $\mathbb G$, with $\mathbb L$ horizontal and one-dimensional. Let $U\subseteq \mathbb W$ be an open set, and let $\phi\colon U\to \mathbb L$ be a continuous function.
	Then the following conditions are equivalent
	\begin{itemize}
		\item[(a)] $\mathrm{graph}(\phi)$ is a $C^1_{\rm H}$-hypersurface with tangents complemented by $\mathbb L$;
		\item[(b)] $\phi$ is uniformly intrinsically differentiable on $U$;
		\item[(c)] $\phi$ is intrinsically differentiable on $U$ and its intrinsic gradient is continuous;
		\item[(d)] there exists $\omega\in C(U;  \R ^{m-1} )$ such that, for every $a\in U$, there exist $\delta>0$ and a family of functions $\{\phi_\eps\in C^1(B(a,\delta)):\eps\in (0,1)\}$ such that
		\[
		\lim_{\eps\to0}\phi_\eps=\phi, \quad\text{and}\quad\lim_{\eps\to0}D_j^{\phi_\eps}\phi_\eps=\omega _j  \quad\text{in $L^\infty(B(a,\delta))$},
		\]
		for every  $j=2,\dots,m$;	
		\item[(e)]  there exists $\omega\in C(U; \R ^{m-1})$ such that $D^\phi \phi=\omega$ in the broad sense on $U$;
		\item[(f)]  there exists $\omega \in C(U; \R ^{m-1} )$ such that $D^\phi \phi=\omega$ in the broad* sense on $U$.
	\end{itemize}
		Moreover if any of the previous holds, $\omega$ is the intrinsic gradient of $\phi$. 
\end{theorem}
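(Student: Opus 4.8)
The plan is to prove the six conditions equivalent by a cycle of implications, keeping the \emph{structural} equivalence (a)$\Leftrightarrow$(b) — essentially the intrinsic implicit function theorem — separate from the \emph{calculus} equivalences among (b)--(f), which is where the hypotheses ``step $2$'' and ``$\dim\LL=1$'' are genuinely used; a convenient ordering is $\text{(a)}\Leftrightarrow\text{(b)}\Rightarrow\text{(c)}\Rightarrow\text{(e)}\Rightarrow\text{(f)}\Rightarrow\text{(d)}\Rightarrow\text{(b)}$. For (a)$\Leftrightarrow$(b): a $C^1_{\rm H}$-hypersurface with tangents complemented by $\LL$ is, locally, the zero set of some $f\in C^1_{\rm H}$ with continuous nonvanishing horizontal gradient whose $X_1$-component does not vanish (the last condition being transversality to $\LL$); the intrinsic implicit function theorem then represents this zero set as $\mathrm{graph}(\phi)$ with $\phi$ uniformly intrinsically differentiable, and reading off the intrinsic differential yields $\nabla^\phi\phi$ as a normalized expression in the components of $\nabla_{\rm H}f$. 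Conversely, if $\phi$ is uniformly intrinsically differentiable then $w\mapsto w\cdot\phi(w)$ is an intrinsic $C^1$-embedding, so its image is a $C^1_{\rm H}$-hypersurface whose tangent at each point is the homogeneous subgroup determined by $\nabla^\phi\phi$, complemented by $\LL$. The implication (b)$\Rightarrow$(c) is immediate, since uniform intrinsic differentiability at each point implies intrinsic differentiability at each point and the locally uniform modulus forces $a\mapsto\nabla^\phi\phi(a)$ to be continuous.

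The heart of the argument is (c)$\Rightarrow$(e). Because $\phi$ is continuous, each operator $D^\phi_j$ is a continuous vector field on $U$, of Burgers type — schematically $X_j$ plus a multiple of $X_1$ whose coefficient depends affinely on $\phi$ — so integral curves through any $a\in U$ exist by Peano's theorem; one has to show that along \emph{every} such curve $\gamma$ the function $t\mapsto\phi(\gamma(t))$ is $C^1$ with derivative $(\nabla^\phi\phi)_j(\gamma(t))$. This is done by comparing the increment of $\phi\circ\gamma$ with the intrinsic differential of $\phi$ at base points lying on $\gamma$: intrinsic differentiability bounds the error at each point, continuity of the gradient upgrades this bound to a locally uniform one, and the fact that $\gamma$ stays in a small neighbourhood lets one integrate. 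The same computation identifies $\omega$ with $\nabla^\phi\phi$, establishing the ``moreover'' clause. Then (e)$\Rightarrow$(f) is trivial, a broad solution being in particular a broad* solution.

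To close the cycle one runs (f)$\Rightarrow$(d)$\Rightarrow$(b). For (f)$\Rightarrow$(d): being a broad* solution means $\phi$ is recovered along a foliating family of characteristics by integrating $\omega_j$, so that $D^\phi_j\phi=\omega_j$ holds a.e.\ (equivalently, distributionally); mollifying in exponential coordinates gives $\phi_\eps\in C^1$, and one controls $D^{\phi_\eps}_j\phi_\eps-\omega_j$ by inserting $(D^\phi_j\phi)_\eps=(\omega_j)_\eps$, the commutator $D^{\phi_\eps}_j\phi_\eps-(D^\phi_j\phi)_\eps$ being small because the nonlinearity is quadratic and $\phi$ is locally uniformly continuous, while $(\omega_j)_\eps\to\omega_j$ locally uniformly by continuity of $\omega$. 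For (d)$\Rightarrow$(b): fix $a$ and a ball on which $\phi_\eps\to\phi$ and $D^{\phi_\eps}_j\phi_\eps\to\omega_j$ uniformly; each $\phi_\eps$ being $C^1$, its intrinsic difference quotients satisfy a Taylor estimate with remainder controlled by the modulus of continuity of $\nabla^{\phi_\eps}\phi_\eps$, and a diagonal argument (choosing $\eps$ according to the scale, using the uniform continuity of the limit $\omega$) passes this to $\phi$, yielding uniform intrinsic differentiability with intrinsic gradient $\omega$; since every estimate here is uniform on the ball, the conclusions hold in the uniform — not merely pointwise — sense.

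The main obstacle is precisely the passage between pointwise/one-curve/approximate information and uniform information. In (c)$\Rightarrow$(e) one must cope with the non-uniqueness of integral curves of the merely continuous field $D^\phi_j$ and show that \emph{all} of them carry the fundamental theorem of calculus; in (f)$\Rightarrow$(d) the regularization must be compatible with the nonlinear operators $D^\phi_j$, and this is exactly where ``step $2$ with $\dim\LL=1$'' is essential: the system $D^\phi\phi=\omega$ is then genuinely of Burgers type, with no interference from higher-order commutators, whereas already in the Engel group such commutators obstruct the argument and the equivalence with the approximate/broad formulations fails. Carrying uniformity in the base point through all of these steps, so that one obtains the global statements rather than their pointwise versions, is the recurring technical burden throughout.
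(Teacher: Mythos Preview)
This theorem is not proved in the present paper: it is quoted from \cite[Theorem~6.17]{ADDDLD} and used as an input, to which the paper then appends item (g) via \cref{thm:Fondamentale1}. There is therefore no in-paper argument to compare against beyond the citation.

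Evaluating your sketch on its own terms, the step (f)$\Rightarrow$(d) contains a genuine gap. You assert that ``being a broad* solution means $\phi$ is recovered along a foliating family of characteristics'' and hence that $D^\phi_j\phi=\omega_j$ holds distributionally; but broad* (Definition~\ref{defbroad*}) only furnishes, for each base point, \emph{some} integral curve of the merely continuous field $D^\phi_j$ along which the fundamental theorem of calculus holds --- these curves can cross, there is no foliation, and no immediate passage to a distributional identity. Even granting the distributional equation, your mollification step breaks: the commutator $D^{\phi_\eps}_j\phi_\eps-(D^\phi_j\phi)_\eps$ is, up to linear pieces that mollify harmlessly, a sum of terms $\partial_{y^*_i}\!\big[(\phi_\eps)^2-(\phi^2)_\eps\big]$; while $(\phi_\eps)^2-(\phi^2)_\eps\to0$ uniformly when $\phi$ is continuous, its vertical derivative need not (take $\phi(y)=|y|^\alpha$ with $\alpha<\tfrac12$ to see blow-up at scale $\eps$). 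What your outline is missing is precisely the ingredient that carries the hard direction in \cite{ADDDLD}: starting from broad* one first proves that $\phi$ is locally $\tfrac12$-little H\"older in the vertical coordinates (\cite[Theorem~6.12]{ADDDLD}, cf.\ \cref{thm:holder} here), via a lift to the free step-2 group of the same rank and an analysis of the characteristic system there. Only with that vertical regularity in hand can one close the cycle back to uniform intrinsic differentiability; without it, the route (f)$\Rightarrow$(d)$\Rightarrow$(b) does not go through as you describe.
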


The main result of the current paper is given by the following implication
\begin{equation}\label{eq:implicazionefondamentale}
D^{\phi}\phi=\omega \quad \text{in the sense of distributions}\Rightarrow D^\phi\phi=\omega \quad\text{in the broad* sense},
\end{equation}
in every Carnot group $\G$ of step 2 and for every continuous $\phi\colon U\subseteq \sW\to \sL$, with $U$ open, and $\omega\in C(U;\R^{m-1})$ with $\sL$ one-dimensional, see \cref{thm:Fondamentale1}.
This result allows us to improve \cref{thm:MainTheoremADDDLD} adding a seventh equivalent condition to the  list above\footnote{To complete the chain of implication one also needs (a) $\Rightarrow$ (g) and this follows from \cite[Proposition 4.10]{ADDDLD}.}:
\begin{itemize}
\item[(g)] there exists $\omega \in C(U; \R ^{m-1} )$ such that $D^\phi \phi=\omega$ holds in the distributional sense on $U$.
\end{itemize}
Item (g) allows us to complete the chain of implications of \cref{thm:MainTheoremADDDLD} in the setting of step-2 Carnot groups generalizing the results scattered in \cite{ASCV, BSC, BSCgraphs} where the authors study the same problem in the Heisenberg groups, and \cite{DD19, DD20} where partial generalizations of the results in \cite{ASCV, BSC, BSCgraphs} are obtained in the case of step-2 Carnot groups.

The strategy of the proof of \eqref{eq:implicazionefondamentale} goes as follows. Given a Carnot group $\G$ of step 2, we consider the free Carnot group $\mathbb F$ with step 2 and the same rank of $\G$, see \cref{sub:freestep2} for the precise choice of identifications. We show in \cref{prop:distribuzionaleinGimplicainF} that if $D^\phi \phi=\omega$ in  distributional sense inside $\G$ with some continuous $\omega\in C(U;\R^{m-1})$, then also $D^\psi\psi=\omega \circ \pi$ in distributional sense in $\mathbb F$, where $\psi\coloneqq\pi^{-1}\circ\phi\circ \pi$, and $\pi\colon\mathbb F\to \G$ is the projection. Then, we prove \cref{prop:distrsopraimplicabroadsopra} that tells us that $D^\psi\psi=\omega$ in distributional sense in $\mathbb F$ with $\omega\in C(U;\R^{m-1})$ implies that $D^\psi\psi=\omega$ in the broad* sense, which is exactly implication \eqref{eq:implicazionefondamentale} in the setting of free Carnot groups of step 2. Finally, we prove in \cref{broadinGimplicabroadinF} that $D^\psi\psi=\omega\circ \pi$ in the broad* sense in $\mathbb F$ implies $D^\phi\phi=\omega$ in the broad* sense in $\G$.
The global strategy of lifting the problem to the free Carnot groups resembles the one used in \cite[Section 6]{ADDDLD} and \cite{LDPS19}. 

The main difficulty arises in the proof of \cref{prop:distrsopraimplicabroadsopra} where we have to combine the dimensional reduction given by \cref{lem:dimensionalreduction} and the translation invariance of \cref{prop:translationinvariance} to reduce ourselves to the Burgers' equation of the first Heisenberg group, and then apply the arguments used for this case in \cite[Eqq.\ (3.4) and (3.5)]{Dafermos} and \cite[Step 1, proof of Theorem 1.2]{BSC}. We point out that this argument is essentially different by the one used in \cite{BSC}. One of the reasons for this is that the distributional equation $D^\phi\phi=\omega$ in arbitrary Carnot groups of step 2 has a significantly different structure compared to the one in the Heisenberg groups. For example, consider a Carnot group of dimension 5, step 2 and rank 3 with Lie algebra $\mathfrak g={\rm span}\{X_1,X_2,X_3,X_4, X_5\}$, horizontal layer $V_1\coloneqq{\rm span}\{X_1, X_2, X_3\}$ and where the only nonvanishing commutators are given by $[X_1,X_2]=X_4+X_5$ and $[X_1,X_3]=X_4-X_5$. Define, in exponential coordinates, $\sW:=\{x_1=0\}$ and $\sL:=\{x_2=x_3=x_4=x_5=0\}$. Then, given a continuous $\phi\colon U\subseteq \sW\to \sL$ on an open set $U$, the operators $D^\phi_j\coloneqq D^\phi_{X_j}$ for $j=2,3$ have the following form (see \cite[Example 3.6]{ADDDLD})
\[
\begin{aligned}
D_2^\phi&=\partial_2+\phi\partial_4+\phi\partial_5,\\
D^\phi_3&=\partial_3+\phi\partial_4-\phi\partial_5,
\end{aligned}
\]
which show a nonlinearity in two vertical directions, instead of only one as in the Heisenberg groups.\footnote{Clearly this double nonlinearity can be removed by considering the Lie algebra automorphism such that $\Psi(X_1)=X_1$, $\Psi(X_2)=\frac 12 X_2+\frac 12X_3$, $\Psi(X_3)=\frac 12X_2-\frac 12X_3$. This is basically our idea of properly lifting step-2 Carnot groups to free Carnot groups with the same rank.}


We remark that \cref{prop:distrsopraimplicabroadsopra} and \cref{thm:Fondamentale1} have also an interesting PDE point of view which allows to see the problem independently of the Carnot group structure. Indeed, the result can be read to obtain the following regularity result. Assume that the Burgers' type system $D^\phi\phi=\omega$ holds in the distributional sense for a continuous map $\phi$ and with the continuous datum $\omega$. Then, from each single equation of the system, we infer the following property: for every $j=2,\dots, m$, $\phi$ is (uniformly) Lipschitz continuous on all the integral curves of the operator $D^\phi_j$. In addition, the Fundamental Theorem of Calculus with derivative $\omega$ holds on some particular local family of integral curves of $D^\phi_j$, namely the broad* condition holds, and then also the broad condition holds, see (f)$\Rightarrow$(e) of \cref{thm:Fondamentale1}. Moreover, when we consider all the equations together, we obtain a remarkable piece of information: $\phi$ is $1/2$-little H\"older continuous on the vertical coordinates, see \cref{thm:holder}.

We remark that \cref{thm:MainTheoremADDDLD} complemented with (g) is optimal in step-2 Carnot groups for the following reason. Already in the Engel group, which is the easiest step-3 Carnot group, we can find a continuous map $\phi$ that solves $D^\phi\phi=\omega$ in the sense of distributions for a constant $\omega$ whose graph is not uniformly intrinsically differentiable (UID). We however notice that we do not know at present if implication \eqref{eq:implicazionefondamentale} holds in Carnot groups of higher step, see \cref{rmk:controesempio}.

We briefly describe the situation in which $\omega$ is less regular. In the paper \cite{BCSC}, the authors show that, in Heisenberg groups, $D^\phi\phi=\omega$ holds in the sense of distributions for some $\omega\in L^\infty(U;\R^{m-1})$ if and only if $\phi$ is intrinsically Lipschitz. The validity of \eqref{eq:implicazionefondamentale} with $\omega \in L^\infty(U;\R^{m-1})$ in the setting of step-2 Carnot groups would open to a slightly modified version of \cref{thm:MainTheoremADDDLD} where $\omega\in L^\infty(U;\R^{m-1})$ and (a) is replaced by 
\begin{itemize}
	\item[(a')] {\rm graph}$(\phi)$ is intrinsically Lipschitz for the splitting  given by $\sW$ and $\sL$.
\end{itemize}
Indeed, having $D^\phi\phi=\omega$ in the broad* sense with $\omega\in L^\infty(U;\R^{m-1})$ would imply that $\phi$ is $1/2$-H\"older continuous along vertical directions. This topic is out of the aims of this paper and will be target of future investigations. 

We notice here that if a generalization of the a priori estimate \cite[Lemma 3.1]{MV} would hold in any step-2 Carnot group, then we could improve \cref{thm:MainTheoremADDDLD} replacing (d) with 
\begin{itemize}
\item[(d')] There exists $\omega\in C(U;\R^{m-1})$ and a family of functions $\{\phi_\eps\in C^1(U):\eps\in (0,1)\}$ such that, for every compact set $K\subseteq  U$ and every  $j=2,\dots, m$, one has
\begin{equation*}
\lim_{\eps\to0}\phi_\eps=\phi \quad\text{and}\quad\lim_{\eps\to0}D_j^{\phi_\eps}\phi_\eps=\omega_j \qquad \mbox{in $ L^\infty(K)$}.
\end{equation*}
\end{itemize}
We refer the reader to \cite[Remark 4.14]{ADDDLD} for a discussion of the literature and of the difference between item (d) and item (d'). We also remark that a smooth approximation that does not involve the intrinsic gradient holds in any Carnot group for intrinsic Lipschitz graphs, see \cite[Theorem 1.6]{Vittone20}.

Intrinsic surfaces of higher codimensions have been studied in the Heisenberg groups in \cite{Corni19, CorniMagnani}. For what concerns the approach via distributional solutions, finding a meaning of the distributional system $D^\phi\phi=\omega$ in higher codimension is still open. The main difficulty comes from the fact that it is not known how to give meaning to mixed terms of the form $\phi_i\partial_x \phi_j$. This was already noticed in \cite[Remark 4.3.2]{Koz15}. A weak formulation that goes in this direction is collected in \cite{MST}, where the authors relate zero-level sets of maps in $C^{1,\alpha}_{\rm H}(\mathbb H;\R^2)$ with curves that satisfies certain ``Level Set Differential Equations'', see \cite[Theorem 5.6]{MST}.

\section{Preliminaries}\label{sec:preliminari}	
\subsection{Carnot groups}
We give a very brief introduction on Carnot groups. We refer the reader to e.g.\ \cite{BLU, SC16, LeDonne} for a comprehensive introduction to Carnot groups. A Carnot group $\mathbb G$ is a connected and simply connected Lie group, whose Lie algebra $\mathfrak g$ is stratified. Namely, there exist subspaces $V_1,\dots, V_s$ of the Lie algebra $\mathfrak g$ such that
\[
\mathfrak g = V_1\oplus \dots\oplus V_s,\qquad [V_j,V_1]=V_{j+1} \quad\forall j=1,\dots,s-1,\qquad [V_s,V_1]=\{0\}.
\]
The integer $s$ is called {\em step} of the group $\G$, while $m\coloneqq\dim (V_1)$ is called \emph{rank} of $\G$. We set $n\coloneqq \mathrm{dim}(\mathbb G)$ to be the topological dimension of $\mathbb G$. We equivalently denote by $e$ or $0$ the identity element of the group $\G$. 


Every Carnot group has a one-parameter family of {\em dilations} that we denote by $\{\delta_\lambda: \lambda >0\}$ defined as the unique linear maps on $\mathfrak g$ such that $\delta_\lambda (X)=\lambda^jX$, for every $X\in V_j$. We denote by $\delta_{\lambda}$ both the dilations on $\mathbb G$ and on $\mathfrak g$, with the usual identification given by the exponential map $\exp\colon\mathfrak g\to \G$ which is a diffeomorphism. 
We fix a homogeneous norm $\|\cdot\|$ on $\G$, namely such that $\|\delta_\lambda x\|=\lambda \|x\|$ for every $\lambda>0$ and $x\in \G$, $\|xy\|\leq \|x\|+\|y\|$ for every $x,y\in\mathbb G$, $\|x\|=\|x^{-1}\|$ for every $x\in\mathbb G$, and $\|x\|=0$ if and only if $x=e$. The norm $\|\cdot\|$ induces a left-invariant homogeneous distance and we denote with $B(a,r)$ the open ball of center $a$ and radius $r>0$ according to this distance. We stress that on a Carnot group a homogeneous norm always exists, and every two left-invariant homogeneous distances are bi-Lipschitz equivalent. 

\begin{defi}[Complementary subgroups]\label{def:ComplementarySubgroups}
	Given a Carnot group $\mathbb G$, we say that two subgroups $\mathbb W$ and $\mathbb L$ are \emph{complementary subgroups} in $\G$ if they are {\em homogeneous}, i.e., closed under the action of $\delta_{\lambda}$ for every $\lambda>0$, $\G=\mathbb W\cdot \mathbb L$ and $\mathbb W\cap \mathbb L=\{e\}$. 
\end{defi}

We say that the subgroup $\mathbb L$ is {\em horizontal and $k$-dimensional} if there exist linearly independent $X_1,\dots$, $X_k \in V_1$ such that $\mathbb L=\exp({\rm span}\{X_1,\dots, X_k\})$.
Given two complementary subgroups $\mathbb W$ and $\mathbb L$, we denote the {\em projection maps} from $\G$ onto $\mathbb W$ and onto $\mathbb L$ by $\pi_{\W}$ and $\pi_{\mathbb L}$, respectively. Defining $g_\sW\coloneqq\pi_\sW g$ and $g_\sL \coloneqq \pi_\sL g$ for any $g\in \G$, one has
\begin{equation}\label{eqn:ComponentsSplitting}
g=(\pi_{\W} g)\cdot(\pi_{\mathbb L} g)= g_{\sW}\cdot g_{\sL}.
\end{equation}
\begin{rem} If $\sW$ and $\sL$ are complementary subgroups of $\G$ and $\sL$ is one-dimensional, then it is easy to see that $\sL$ is horizontal. For the sake of clarity, we will always write $\sL$ horizontal and one-dimensional even if one-dimensional is technically sufficient. Notice also that, if $\sW$ and $\sL$ are complementary subgroups and $\sL$ is horizontal, then $\sW$ is a normal subgroup of $\G$.
\end{rem}

\begin{defi}[Adapted basis]
	Denote by $n_j\coloneqq\sum_{i=1}^j{\rm dim}(V_i)$, for $j=1,\dots, s$ and $n_0\coloneqq 0$. We say that a basis $(X_1,\dots, X_n)$ of $\mathfrak g$ is {\em adapted} if the following facts hold
	\begin{itemize}
		\item For every $j=1,\dots, s$, the set $\{X_{n_{j-1}+1},\dots, X_{n_j}\}$ is a basis for $V_j$.
		\item For any $j=1,\dots, s$, the vectors $X_{n_{j-1}+1},\dots, X_{n_j}$ are chosen among the iterated commutators of length $j-1$ of the vectors $X_1,\dots, X_m$.
	\end{itemize}
\end{defi}

\begin{defi}[Exponential coordinates]\label{def:coordinateconletilde}
	Let $\G$ be a Carnot group of dimension $n$ and let $(X_1,\dots,X_n)$ be an adapted basis of its Lie algebra. The {\em exponential coordinates of the first kind} associated with $(X_1,\dots,X_n)$ are given by the one-to-one correspondence
	\[
	\begin{aligned}
	\R^n&\leftrightarrow \G\\
	(x_1,\dots,x_n)&\leftrightarrow\exp\left( x_1X_1+\ldots+x_nX_n\right).
	\end{aligned}
	\]
	It is well known that this defines a diffeomorphism from $\R^n$ to $\G$ that allows us to identify $\G$ with $\R^n$.
\end{defi}

\subsection{Carnot groups of step 2 $\mathbb G$ in exponential coordinates.}\label{sub:step2} We here introduce Carnot groups of step 2 in exponential coordinates. We adopt as a general reference \cite[Chapter~3]{BLU}, but the interested reader could also read the beginning of \cite[Subsection 6.2]{ADDDLD}. {In this subsection $\mathbb G$ will always be an arbitrary Carnot group of step 2.}

We denote with $m$ the rank of $\G$ and we identify $\mathbb G$ with $(\R^{m+h}, \cdot )$ by means of exponential coordinates associated with an adapted basis $(X_1',\dots,X_m',Y_1',\dots,Y_h')$ of the Lie algebra $\mathfrak g$. {In this coordinates, we will identify any point $q\in\mathbb G$ with $q\equiv (x_1,\dots,x_m,y^*_1,\dots,y^*_{h})$}. The group operation $\cdot$ between two elements $q=(x,y^*)$ and $q'=(x',(y^*)')$ is given by
\begin{equation}\label{5.1.0}
q\cdot q'= \left(x+ x',y^*+ (y^*)'-\frac 1 2\langle \mathcal{B} x,  x' \rangle \right),
\end{equation} 
where $\langle \mathcal{B}x,x' \rangle \coloneqq (\langle \mathcal{B}^{(1)}x,x' \rangle, \dots , \langle \mathcal{B}^{(h)} x, x' \rangle)$ and $\mathcal B^{(i)}$ are linearly independent and skew-symmetric matrices in $\R^{m\times m}$, for $i=1,\dots, h$.
For any $i=1,\dots, h$ and any $j,\ell=1,\dots,m$, we set $(\mathcal{B}^{(i)})_{j\ell}\eqqcolon (b_{j\ell}^{(i)})$, and it is standard to observe that we can write
\begin{equation*}
\begin{aligned}
X'_j (p) & = \partial _{x_j}  -\frac{1}{2 } \sum_ {i=1 }^{h} \sum_ {\ell=1 }^{m} b_ {j\ell}^{(i)} x_\ell  \,\partial _{y^*_i},  \quad \mbox{ for } j=1,\dots ,m,\\
Y'_i(p)  & = \partial _{y^*_i }, \,\qquad \qquad \qquad \qquad \qquad \mbox{ for } i=1,\dots , h.
\end{aligned}
\end{equation*}  
 We stress that the operation in \eqref{5.1.0} is precisely the one obtained by means of the Baker-Campbell-Hausdorff formula in exponential coordinates of the first kind associated with the adapted basis $(X'_1,\dots,X'_m,Y'_1,\dots,Y'_h)$. We also stress that
\begin{equation}\label{commutatoripasso2}
[X'_j, X'_\ell]= \sum _{i=1}^h b_{j\ell}^{(i)} Y'_i,  \quad \mbox{and} \quad  [X'_j , Y'_{i}] = 0,\quad \forall j,\ell=1,\dots, m, \;\text{ and }\; \forall i=1,\dots,h,
\end{equation}
so that it is clear that $b^{(i)}_{j\ell}$, with $i=1,\dots,h$, and $1\leq j,\ell\leq m$, are the so-called  {\em structure coefficients}.   

In the sequel we denote by $\W_\G$ and $\mathbb L_\G$ two arbitrary complementary subgroups of $\G$ with $\mathbb L_\G$ horizontal and one-dimensional. Up to choosing a proper adapted basis of the Lie algebra $\mathfrak g$, we may suppose that $\mathbb L_{\mathbb G}=\exp({\rm span}\{X_1\})$. Thus, by means of exponential coordinates we can identify $\mathbb W_{\mathbb G}$ and $\mathbb L_{\mathbb G}$ with $\R^{m+h-1}$ and $\R$, respectively, as follows
\begin{equation}\label{5.2.0.1}
\begin{aligned}
\mathbb L_\G & \equiv\{ (x_1,0\dots , 0) \,:\, x_1\in\R \},\\
\W_\G & \equiv\{ (0,x_2, \dots,x_m,y^*_1,\dots, y^*_h) \,:\, x_i, y^*_k \in\R \mbox{ for }i=2,\dots, m;\, k=1,\dots h \}.
\end{aligned}
\end{equation}

\subsection{Free Carnot groups of step 2 $\mathbb F$ in exponential coordinates.}\label{sub:freestep2} We here introduce free Carnot groups of step 2 in exponential coordinates. We adopt as a general reference \cite[Chapter~3]{BLU}, but the interested reader could also read the beginning of \cite[Subsection 6.1]{ADDDLD}. {In this subsection  $\mathbb F$ will always denote a free Carnot group of step 2 and rank $m$}. Recall that the topological dimension of $\mathbb F$ is $m+ \frac{m(m-1)}{2}$ and denote by $(X_1,\dots, X_m, Y_{21},\dots,Y_{m(m-1)})$ an adapted basis of the Lie algebra of $\mathbb F$ such that $[X_{\ell},X_s]=Y_{\ell s}$ for every $1\leq s<\ell \leq m$. 

If we set $n\coloneqq m+\frac{m(m-1)}{2}$, we can identify $\mathbb F$ with $\R^n$ by means of exponential coordinates associated with the adapted basis $(X_1,\dots, X_m, Y_{21},\dots,Y_{m(m-1)})$. {In this coordinates, we will identify any point $q\in\mathbb F$ with $q\equiv (x_1,\dots,x_m,y_1,\dots,y_{m(m-1)})$}. It is readily seen that, in such coordinates, we have
\begin{equation}\label{eqn:VectorFieldsFree}
\begin{aligned}
X_j &= \partial _{x_j}+ \frac 12 \sum_{ j<\ell\leq m } x_\ell \partial _{y_{\ell j}} -\frac 12 \sum_{ 1\leq \ell<j} x_\ell \partial _{y_{j\ell}}, \quad \mbox{ if } 1\leq j\leq m,\\
Y_{\ell s}&= \partial _{y_{\ell s}}, \hphantom{\frac 12 \sum_{ j<\ell\leq m } x_\ell \partial _{\ell j} -\frac 12 \sum_{ 1\leq \ell<j} x_\ell \partial _{y_{j\ell}}} \qquad  \mbox{ if } 1\leq s<\ell\leq m.
\end{aligned}
\end{equation}
Moreover, for any $q=(x,y)$ and $q'=(x',y')$ in $\mathbb F$, the product $q\cdot q'$  is given by the Baker-Campbell-Hausdorff formula, and yields
\begin{equation*}
\begin{aligned}
(q\cdot q')_j &=x_j+x'_j,\quad \quad \qquad \qquad \qquad \,\,\,\,\, \text{ if } 1\leq j\leq m,\\
(q\cdot q')_{\ell s} &=y_{\ell s}+y'_{\ell s} +\frac 1 2 (x_\ell x'_s-x'_\ell x_s), \quad \text{ if } 1\leq s<\ell\leq m.\\
\end{aligned}
\end{equation*}

\subsection{Projection from $\mathbb F$ to $\G$.} Fix a Carnot group $\mathbb G$  of step 2 and rank $m$ as in \cref{sub:step2} and let $\mathbb F$ be a free Carnot group of rank $m$ and step $s$. By definition of free Carnot groups, there exists a Lie group surjective homomorphism $\pi \colon \mathbb F \to \G$ such that 
\begin{equation}\label{eqn:PISTAR}
\pi _*(X_\ell)=X'_\ell,
\end{equation} 
for any $\ell=1,\dots ,m$ (see e.g.\ \cite[Section~6]{LDPS19}). {We identify $\mathbb F$ and $\mathbb G$ with $\mathbb R^n$ and $\mathbb R^{m+h}$, respectively, by means of exponential coordinates as explained above and in \cref{sub:step2} and \cref{sub:freestep2}}. From \eqref{eqn:PISTAR}, jointly with the very definition of exponential coordinates, we notice that for any $(x,y)\in \R^{n}$, where $x\in \R^m$ and $y\in \R^{m(m-1)/2}$, there exists $y^*\in \R^h$ such that
\begin{equation}\label{proiezione}
\pi(x,y)= (x,y^*).
\end{equation}
Since $\pi$ is a Lie group homomorphism, its differential is a Lie algebra homomorphism. Hence, for any $1\leq s<\ell\leq m$, we have that
\begin{equation*}
\pi_* (Y_{\ell s})=\pi_*  ([X_\ell, X_s])=[\pi_* (X_\ell),\pi_*  (X_s)]=[X'_\ell, X'_s] = \sum _{i=1}^h b_{\ell s}^{(i)} Y'_i,
\end{equation*}
where we used \eqref{commutatoripasso2}, \eqref{eqn:PISTAR}, and the fact that for $1\leq s<\ell\leq m$ one has $[X_\ell , X_s] = Y_{\ell s}$. We can therefore write the following formula
\begin{equation}\label{definizioneesplicitaP}
\begin{split}
& \pi(x_1,\dots , x_m, y_{21},\dots , y_{m(m-1)})  =(x_1,\dots , x_m, y^*_1,\dots , y^*_h), \quad \text{where}\\
& y^*_i  =  \sum _{1\leq s<\ell \leq m}  b_{\ell s}^{(i)} y_{\ell s}, \quad \forall i=1,\dots, h.\\
\end{split}
\end{equation}


\begin{rem}[Main identification]\label{rem:IDENTIFICATION}
	Given a Carnot group $\mathbb G$ of step 2 and rank $m$, and a free Carnot group $\mathbb F$ of step 2 and rank $m$ we work in the coordinates of \cref{sub:step2} and \cref{sub:freestep2}. Let $\mathbb W_{\mathbb G}$ and $\mathbb L_{\mathbb G}$ be two complementary subgroups of $\mathbb G$, with $\mathbb L_{\mathbb G}$ one-dimensional. Up to a proper choice of an adapted basis, we can assume we are working in a basis in which \eqref{5.2.0.1} holds. Thus, taking into account \eqref{proiezione}, we are in a position to lift $\mathbb W_{\mathbb G}$ and $\mathbb L_{\mathbb G}$ to two complementary subgroups $\W_{ \mathbb{F}}$ and $\mathbb{L}_{ \mathbb{F}}$ of $\mathbb F$ such that $\pi_{|_{ \mathbb L _{ \mathbb{F}} }} \colon\mathbb L _{ \mathbb{F}}  \to \mathbb L_\G$ is an isomorphism and $\pi_{|_{ \mathbb W_{ \mathbb{F}} }} \colon\mathbb W_{ \mathbb{F}}  \to \mathbb W_\G$ is onto. In this way we have the following identification
\begin{equation}\label{5.2.0}
\begin{aligned}
\mathbb L_{\mathbb F} & \equiv \left\{ (x_1,\dots,x_m,y_{21},\dots,y_{m(m-1)})\in \R^n : x_2=\dots=x_m=y_{21}=\dots=y_{m(m-1)}= 0 \right\},\\
\W_{\mathbb F} & \equiv \left\{ (x_1,\dots,x_m,y_{21},\dots,y_{m(m-1)})\in\R^n : x_1=0 \right\}.
\end{aligned}
\end{equation}
\end{rem}

\subsection{Projected vector fields in Carnot groups of step 2}\label{sub:projected}
We recall here the definition of projected vector fields \cite[Definition 3.1]{ADDDLD}.
\begin{defi}[Projected vector fields]\label{def:PhiJ}
	Given two complementary subgroups $\mathbb W$ and $\mathbb L$ in a Carnot group $\mathbb G$, and a continuous function $\phi\colon U\subseteq\mathbb W\to\mathbb L$ defined on an open set $U$ of $\mathbb W$, we define, for every $W\in \mathrm{Lie}(\mathbb W)$,  the continuous {\em projected vector field} $D^\phi_W$, by setting 
	\begin{equation}\label{eqn:DefinitionOfDj}
	(D^{\phi}_{W})_{|_w} (f)\coloneqq W_{|_{w\cdot\phi(w)}}( f\circ \pi_{\sW}),
	\end{equation}
	for all $w\in U$ and all $ f\in C^{\infty}(\W)$. When $W$ is an element $X_j$ of an adapted basis $(X_1,\dots,X_n)$ we also write $D^{\phi}_j\coloneqq D^{\phi}_{X_j}$.
\end{defi}

Let us fix $\mathbb G$ a Carnot group of step 2 and rank $m$ along with two complementary subgroups $\mathbb W_{\mathbb G}$ and $\mathbb L_{\mathbb G}$ such that $\mathbb L_{\mathbb G}$ is horizontal and one-dimensional. Assume we have chosen a basis in such a way that \eqref{5.2.0.1} is satisfied. Take $\mathbb F$ the free step-2 Carnot group of rank $m$ and introduce $\mathbb W_{\mathbb F}$ and $\mathbb L_{\mathbb F}$ as in \cref{rem:IDENTIFICATION}. {In this subsection we work in exponential coordinates and we use the identifications and the coordinate representations discussed in \cref{sub:step2}, \cref{sub:freestep2}, and \cref{rem:IDENTIFICATION}}.  We recall that from \cite[Example 3.6 \& Remark 6.9]{ADDDLD} the projected vector fields relative to a continuous function $\phi\colon U\subseteq \sW_\G\to\sL_\G$, with $U$ open, are given by
\begin{equation}\label{operatoriproiettatiinG}
\begin{aligned}
D_{j}^\phi & = \partial _{x_j}-  \sum_{ i=1 }^{h  } \left(  b_{j1}^{(i)} \phi  +\frac 1 2 \sum_{ k=2 }^{m  } x_k b_{jk}^{(i)}\right) \partial _{y^*_i} =  {X'_j}_{\vert U}-  \sum_{i=1}^hb_{j1}^{(i)}  \phi  {Y'_{i}}_{\vert U},  \quad \mbox{ for } j=2,\dots, m,\\
D^\phi_{i} & = \partial_{y^*_i}  =  {Y'_{i}}_{\vert U},  \qquad \mbox{ for }  i=1,\dots, h.
\end{aligned}
\end{equation}

		
    In addition, in the particular case of the free Carnot group $\mathbb F$, given $V\subseteq \mathbb W_{\mathbb F}$ an open set, and given a continuous map $\psi\colon V\subseteq\sW_{\mathbb F}\to \sL_{\mathbb F}$, the projected vector fields are given by
	\begin{equation}\label{operatoriproiettatiinF}
	\begin{aligned}
	D_j^\psi = \partial _{x_j}-\psi \partial _{y_{j1}} + \frac 1 2 \sum_{  j<\ell\leq m } x_\ell\partial _{y_{\ell j}} -  \frac 1 2 \sum_{1<s <j} x_s \partial _{y_{js}}=  {X_j}_{| V}-\psi  {Y_{j1}}_{|V},  \quad &\mbox{ for } j=2,\dots, m,\\
	D^\psi_{\ell s}  = \partial _{y_{\ell s}}  =  {Y_{\ell s}}_{\vert V},  \ \hphantom{=\frac 1 2 \sum_{  j<\ell\leq m } x_\ell\partial _{\ell j} -  \frac 1 2 \sum_{1<s <j} x_s \partial _{js}=  {X_j}_{\vert V}-\psi  {Y_{j1}}_{|V}}\quad &\mbox{ for }  1\leq s<\ell\leq m.
	\end{aligned}
	\end{equation} 
	Then, see also \cite[Remark 6.5]{ADDDLD}, for each $j=2,\dots,m$, every integral curve $\gamma_j\colon I\to \mathbb W_{\mathbb F}\equiv \mathbb R^{n-1}$ of $D^{\psi}_j$ defined on an interval $I\subseteq \R$  has vertical components $y\coloneqq(y_{\ell s})_{1\leq s<\ell\leq m}\colon I\to \mathbb R^{\frac{m(m-1)}2}$  satisfying the following equations
	\begin{equation}\label{eq:ODEverticali}
	\begin{aligned}
	\dot y _{j1} (t)&=-\psi (x_2,\dots ,x_{j-1}, x_j+t,x_{j+1},\dots ,x_m, y(t)), \\
	\dot y _{\ell j} (t)&= \frac 12 x_\ell , \quad \quad  \qquad \mbox{ if }  j<\ell\leq m,\\
	\dot y _{js} (t)&=  -\frac 12 x_s, \quad  \,\qquad \mbox{ if }  1<s <j, \\
	\dot y_{\ell s} (t)&= 0,\quad \qquad  \quad \,\quad  \mbox{ otherwise,} 
	\end{aligned}
	\end{equation}
	where the horizontal components of $\gamma_j(0)$ are $(0,x_2,\dots, x_m)$.

\begin{rem}[Projection on codimension-one subgroups in Carnot groups of step 2]\label{rem:proprietaPq}
	Notice that, if $\sW_{\mathbb G}$ and $\sL_{\mathbb G}$ are complementary subgroups defined as in \eqref{5.2.0.1}, then $\pi_{\sW_{\mathbb G}}\colon \mathbb G\equiv \R^{m+h}\to\mathbb W_{\mathbb G}\equiv \R^{m+h-1}$ is given by
	\begin{equation}\label{eq:proiezionestep2}
	\begin{aligned}
	\pi_{\sW_{\mathbb G}}(x_1,\dots,x_m,y^*_{1}\dots,y^*_{h})=\left(0,x_2,\dots,x_m,\dots, y^*_{i}-\frac12\sum_{j=1}^m b_{j1}^{(i)}x_jx_1, \dots \right), \,\,\, \text{with $i=1,\dots,h$.}
	\end{aligned}
	\end{equation}
	Indeed, it is enough to observe that, thanks to the explicit expression of the product in \eqref{5.1.0}, the following equality holds:
	\[
	\pi_{\sW_{\mathbb G}}(x_1,\dots, x_m, y^*_{1},\dots, y^*_{h})\cdot(x_1,0,\dots,0)=(x_1,\dots, x_m,y^*_{1},\dots, y^*_{h}).
	\]
	For every $q\in \G$, we define the map 
	\begin{equation}\label{eq:definizionePq}
	\begin{aligned}
	P_q\colon \sW_{\mathbb G}&\to \sW_{\mathbb G}\\
	w&\mapsto\pi_{\sW_{\mathbb G}}(q\cdot w).
	\end{aligned}
	\end{equation}
 Set $q=(q_1,\dots,q_m,q_{m+1},\dots,q_{m+h})\in\mathbb G$ and $w=(w_1:=0,w_2,\dots,w_m,w_{m+1},\dots,w_{m+h})\in\mathbb W_{\G}$. By using \eqref{eq:proiezionestep2} and \eqref{5.1.0}, one has, being $i=1,\dots,h$, that the following chain of equalities holds
	\begin{equation}\label{eqn:ExpressionOfPqw}
	\begin{aligned}
	&P_q(w)=\pi_{\sW_{\mathbb G}}(q\cdot w)\\&=\Big(0,q_2+w_2,\dots, q_m+w_m,\dots,q_{m+i}+w_{m+i}+\frac12\sum_{j=1}^m\sum_{\ell=2}^m b^{(i)}_{j\ell}q_jw_\ell-\frac 12\sum_{j=1}^mb^{(i)}_{j1} (q_j+w_j)q_1,\dots\Big)\\&=\Big(0,q_2+w_2,\dots, q_m+w_m,\dots,\\& \hphantom{=}\dots, q_{m+i}+w_{m+i}+\frac12\sum_{j=1}^m\sum_{\ell=2}^mb^{(i)}_{j\ell}q_jw_\ell+\frac 12\sum_{\ell=2}^mb^{(i)}_{1\ell} w_\ell q_1-\frac12\sum_{\ell=2}^mb_{\ell1}^{(i)}q_\ell q_1,\dots\Big)\\&
	=\Big(0,q_2+w_2,\dots, q_m+w_m,\dots,\\& \hphantom{=}\dots, q_{m+i}+w_{m+i}+\frac12\sum_{\ell=2}^mw_\ell\left(\sum_{j=1}^mb^{(i)}_{j\ell}q_j+b^{(i)}_{1\ell} q_1\right)-\frac12\sum_{\ell=2}^mb_{\ell1}^{(i)}q_\ell q_1,\dots\Big),
	\end{aligned}
	\end{equation}
	where we used the fact that the first component of $w$ is zero and that $\mathcal B^{(i)}$ is skew-symmetric and therefore $ b_{11}^{(i)}=0$. If we see $P_q$ as a map from $\R^{m+h-1}$ to $\R^{m+h-1}$, the differential of $P_q$ at a point $w\in \sW$ is identified with a $(m+h-1)\times(m+h-1)$ matrix with the following components
	\begin{equation}\label{eq:componentidifferenziale}
	\begin{aligned}
	&(\de P_q)(w)_{ii}=1, \hspace{0.6\textwidth}\negphantom{$(\de P_q)(w)_{ii}=1$}\forall i=1,\dots,m+h-1,\\
	&(\de P_q)(w)_{m+i-1,\ell-1}=\left(\sum_{j=2}^m\frac12b_{j\ell}^{(i)}q_j+b_{1\ell}^{(i)}q_1\right),\hspace{0.605\textwidth}\negphantom{$(\de P_q)(w)_{m+i-1,\ell-1}=\left(\sum_{j=2}^m\frac12b_{j\ell}^{(i)}q_j+b_{1\ell}^{(i)}q_1\right)$}\forall i=1,\dots,h; \ell=2,\dots,m,\\
	&(\de P_q)(w)_{j\ell}=0,\hspace{0.6\textwidth}\negphantom{$(\de P_q)(w)_{j\ell}=0$} \text{otherwise.}
	\end{aligned}
	\end{equation}
	In particular, $\det (\de P_q)(w)=1$ for any $w\in \sW$.
\end{rem}
	
\subsection{Invariance properties of projected vector fields} We collect here some invariance properties that we will use later on. We introduce the operation of $q$-translation of a function. 
 \begin{defi}[Intrinsic graph of a function]\label{def:IntrinsicGraph}
	Given two complementary subgroups $\mathbb W$ and $\mathbb L$ of a Carnot group $\mathbb G$, and a function $\phi\colon U\subseteq \mathbb W\to\mathbb L$, we define the graph of $\phi$ by setting
	\[
	\mathrm{graph}(\phi)\coloneqq\{\Phi(w):=w\cdot\phi( w): w\in U\}=\Phi(U).
	\]
\end{defi}
\begin{defi}[Intrinsic translation of a function]\label{def:PhiQ}
	Given two complementary subgroups $\mathbb W$ and $\mathbb L$ of a Carnot group $\mathbb G$ and a map ${\phi}\colon{U}\subseteq\mathbb W\to\mathbb L$, we define, for every $q\in\mathbb G$,
	\[
	{U}_q\coloneqq\{a\in\mathbb W: \pi_{\sW}(q^{-1}\cdot a)\in {U}\},
	\]
	and ${\phi}_q\colon{U}_q\subseteq \mathbb W\to\mathbb L$ by setting
	\begin{equation}\label{eqn:Phiq}
	{\phi}_q(a)\coloneqq\left(\pi_{\mathbb L}(q^{-1}\cdot a)\right)^{-1}\cdot {\phi}\left(\pi_{\mathbb W}(q^{-1}\cdot a)\right).
	\end{equation}
\end{defi}
\noindent Notice that $U_q=P_q(U)$, where $P_q$ is defined as in \eqref{eq:definizionePq}. This easily comes from the fact that for every $q\in\mathbb G$ $P_q\circ P_{q^{-1}}=\mathrm{Id}_{\mathbb W}$, see e.g.\ the proof of \cref{prop:translationinvariance}.
 
 The following results can be found in \cite[Proposition 2.10]{ADDDLD} and \cite[Lemma 3.13, and Equations (45)-(46)]{ADDDLD}, respectively.
	\begin{prop}[{\cite[Proposition 2.10]{ADDDLD}}]\label{prop:PropertiesOfIntrinsicTranslation}
		Let $\mathbb W$ and $\mathbb L$ be two complementary subgroups of a Carnot group $\mathbb G$ and let ${\phi}\colon{U}\subseteq\mathbb W\to\mathbb L$ be a function. Then, for every $q\in\mathbb G$, the following facts hold.
		\begin{itemize}
			\item[(a)] ${\rm graph}({\phi}_q)=q\cdot{\rm graph}({\phi})$;
			\item[(b)] $({\phi}_q)_{q^{-1}}={\phi}$;
			\item[(c)] If $\mathbb W$ is normal, then $ U_q=q_{\mathbb W}\cdot\left(q_{\mathbb L}\cdot  U\cdot (q_{\mathbb L})^{-1}\right)$ and
			\begin{equation*}
			{\phi}_q(a)=q_{\mathbb L}\cdot{\phi}((q_{\mathbb L})^{-1}\cdot q_{\mathbb W}^{-1}\cdot a\cdot q_{\mathbb L}),
			\end{equation*}
			for any $a\in U_q$;
			\item[(d)] If $q={\phi}(a)^{-1}\cdot a^{-1}$ for some $a\in {U}$, then 
			\begin{equation*}
			{\phi}_q(e)=e.
			\end{equation*}
		\end{itemize}
	\end{prop}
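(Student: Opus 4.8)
The statement to be proved is \cref{prop:PropertiesOfIntrinsicTranslation}, the collection of four invariance properties of the intrinsic translation $\phi_q$. The plan is to unwind \cref{def:PhiQ} and \cref{def:IntrinsicGraph} directly, using the splitting \eqref{eqn:ComponentsSplitting} and the group law; most of the work is bookkeeping with projections onto complementary subgroups.

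For (a), I would compute $\operatorname{graph}(\phi_q)=\{a\cdot\phi_q(a):a\in U_q\}$ and show it equals $q\cdot\operatorname{graph}(\phi)$. The key algebraic identity is that for any $g\in\mathbb G$ one has $g=\pi_{\mathbb W}(g)\cdot\pi_{\mathbb L}(g)$, so writing $g=q^{-1}\cdot a$ gives $a=q\cdot\pi_{\mathbb W}(q^{-1}\cdot a)\cdot\pi_{\mathbb L}(q^{-1}\cdot a)$. Hence $a\cdot\phi_q(a)=q\cdot\pi_{\mathbb W}(q^{-1}\cdot a)\cdot\phi(\pi_{\mathbb W}(q^{-1}\cdot a))$, and as $a$ ranges over $U_q$, the point $\pi_{\mathbb W}(q^{-1}\cdot a)$ ranges exactly over $U$ (this is precisely the definition of $U_q$, together with surjectivity which follows since $w\mapsto q\cdot w$ maps $\mathbb W$-points to the fibre). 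This gives $\operatorname{graph}(\phi_q)=q\cdot\operatorname{graph}(\phi)$. Item (b) then follows formally: since $q\cdot\operatorname{graph}(\phi)$ has the form $\operatorname{graph}(\psi)$ for a unique $\psi$ (graphs over $\mathbb W$ being determined by their point set, because $\mathbb W$ and $\mathbb L$ are complementary), applying (a) twice gives $\operatorname{graph}((\phi_q)_{q^{-1}})=q^{-1}\cdot q\cdot\operatorname{graph}(\phi)=\operatorname{graph}(\phi)$, whence $(\phi_q)_{q^{-1}}=\phi$; one should also check $(U_q)_{q^{-1}}=U$, which is the identity $P_{q^{-1}}\circ P_q=\mathrm{Id}_{\mathbb W}$.

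For (c), assume $\mathbb W$ is normal. Then for $a\in\mathbb W$ we can write $q^{-1}\cdot a=q_{\mathbb L}^{-1}\cdot q_{\mathbb W}^{-1}\cdot a$; using normality of $\mathbb W$, $q_{\mathbb W}^{-1}\cdot a\in\mathbb W$ and $q_{\mathbb L}^{-1}\cdot(q_{\mathbb W}^{-1}\cdot a)\cdot q_{\mathbb L}\in\mathbb W$, so $q^{-1}\cdot a=\big(q_{\mathbb L}^{-1}\cdot q_{\mathbb W}^{-1}\cdot a\cdot q_{\mathbb L}\big)\cdot q_{\mathbb L}^{-1}$, which identifies $\pi_{\mathbb W}(q^{-1}\cdot a)=q_{\mathbb L}^{-1}\cdot q_{\mathbb W}^{-1}\cdot a\cdot q_{\mathbb L}$ and $\pi_{\mathbb L}(q^{-1}\cdot a)=q_{\mathbb L}^{-1}$. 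Substituting into \eqref{eqn:Phiq} gives $\phi_q(a)=q_{\mathbb L}\cdot\phi(q_{\mathbb L}^{-1}\cdot q_{\mathbb W}^{-1}\cdot a\cdot q_{\mathbb L})$, and the description of $U_q$ drops out of the condition $\pi_{\mathbb W}(q^{-1}\cdot a)\in U$, i.e.\ $q_{\mathbb L}^{-1}\cdot q_{\mathbb W}^{-1}\cdot a\cdot q_{\mathbb L}\in U$, rearranged to $a\in q_{\mathbb W}\cdot(q_{\mathbb L}\cdot U\cdot q_{\mathbb L}^{-1})$. For (d), take $q=\phi(a)^{-1}\cdot a^{-1}$; then $q^{-1}=a\cdot\phi(a)=\Phi(a)\in\operatorname{graph}(\phi)$, so $q^{-1}\cdot e=q^{-1}=\Phi(a)$ has $\pi_{\mathbb W}(q^{-1})=a\in U$ and $\pi_{\mathbb L}(q^{-1})=\phi(a)$, and plugging into \eqref{eqn:Phiq} gives $\phi_q(e)=\phi(a)^{-1}\cdot\phi(a)=e$; equivalently one can note $e=q\cdot q^{-1}=q\cdot\Phi(a)\in q\cdot\operatorname{graph}(\phi)=\operatorname{graph}(\phi_q)$ by (a), forcing $\phi_q(e)=e$.

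The only genuinely delicate point is the surjectivity/well-definedness underlying (a) and (b): one must be sure that $a\mapsto\pi_{\mathbb W}(q^{-1}\cdot a)$ is a bijection $U_q\to U$ with inverse $w\mapsto\pi_{\mathbb W}(q\cdot w)=P_q(w)$, i.e.\ that $P_q\circ P_{q^{-1}}=P_{q^{-1}}\circ P_q=\mathrm{Id}_{\mathbb W}$. This is a short computation: for $w\in\mathbb W$, write $q\cdot w=P_q(w)\cdot\pi_{\mathbb L}(q\cdot w)$, apply $q^{-1}\cdot$ on the left, and project again onto $\mathbb W$, using that $\pi_{\mathbb L}(q\cdot w)\in\mathbb L$ commutes past the projection appropriately; since $\mathbb W$ is normal when $\mathbb L$ is horizontal and one-dimensional (the setting relevant for us, cf.\ the remark before \cref{def:coordinateconletilde}), this is transparent. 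I expect no serious obstacle beyond this careful tracking of the two projection maps.
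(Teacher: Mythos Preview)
Your argument is correct, and there is nothing in the paper to compare it against: \cref{prop:PropertiesOfIntrinsicTranslation} is merely quoted from \cite[Proposition~2.10]{ADDDLD} and not re-proved here, so the standard direct verification you give (unwinding \eqref{eqn:Phiq} and \eqref{eqn:ComponentsSplitting}) is exactly what one expects.

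One small remark on your last paragraph: you do \emph{not} need normality of $\mathbb W$ to establish the bijection $P_q\circ P_{q^{-1}}=\mathrm{Id}_{\mathbb W}$ underlying (a) and (b). Your own computation already shows this: from $q\cdot w=P_q(w)\cdot\pi_{\mathbb L}(q\cdot w)$ one gets $q^{-1}\cdot P_q(w)=w\cdot(\pi_{\mathbb L}(q\cdot w))^{-1}$, and since the right-hand side is visibly in $\mathbb W\cdot\mathbb L$ form, uniqueness of the splitting gives $\pi_{\mathbb W}(q^{-1}\cdot P_q(w))=w$, i.e.\ $P_{q^{-1}}\circ P_q=\mathrm{Id}_{\mathbb W}$, with no use of normality. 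So items (a), (b), (d) hold in the generality stated, and normality is only needed for the explicit formulas in (c), as the proposition claims.
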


\begin{lem}[{\cite[Lemma 3.13, and Equations (45)-(46)]{ADDDLD}}]\label{lem:Invariance2IntegralCurve}
	Let $\mathbb W$ and $\mathbb L$ be two complementary subgroups of a Carnot group $\mathbb G$, with $\mathbb L$ $k$-dimensional and horizontal and let ${\phi}\colon{U}\subseteq \mathbb W\to \mathbb L$ be a continuous function defined on $ U$ open. Take $W\in{\rm Lie}(\mathbb W)$, and let us denote $D^{\phi}\coloneqq D^{\phi}_W$. Let $T>0$, $w\in \mathbb W$, and let ${\gamma}\colon [0,T]\to  U$ be a $C^1$-regular solution of the Cauchy problem 
	\begin{equation}\label{eqn:CauchyProblem}
	\begin{cases}
	{\gamma}'(t)=D^{\phi}\circ{\gamma}(t), \\
	{\gamma}(0)= w.
	\end{cases}
	\end{equation}
	Then for every $q\in \G$ there exists a unique $C^1$ map ${\gamma}_q\colon[0,T]\to U_q$ such that
	\begin{equation}\label{eqn:PropertyOfGammaTilde}
	\pi_{\sW}(q^{-1}\cdot {\gamma}_q(t))={\gamma}(t), \qquad \forall t\in[0,T].
	\end{equation}
	In addition, $\gamma_q$ is a solution of the Cauchy problem 
	\begin{equation}\label{eqn:IntegralCurveOfDPhiQ}
	\begin{cases}
	{\gamma}_q'(t)=D^{\phi_q}\circ{\gamma}_q(t), \\
	{\gamma}_q(0)=q_{\mathbb W}\cdot q_{\mathbb L}\cdot  w\cdot (q_{\mathbb L})^{-1}.
	\end{cases}
	\end{equation}
	
	Moreover the following equality holds
	\begin{equation}\label{eqn:Invariance}
	\phi_q(\gamma_q(0))^{-1}\cdot \phi_q(\gamma_q(t)) = \phi(\gamma(0))^{-1}\cdot \phi(\gamma(t)), \qquad \forall t\in[0,T]. 
	\end{equation}
\end{lem}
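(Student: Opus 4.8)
The plan is to take $\gamma_q\coloneqq P_q\circ\gamma$, where $P_q\colon\mathbb W\to\mathbb W$, $w\mapsto\pi_{\mathbb W}(q\cdot w)$, is the map of \eqref{eq:definizionePq}, and to check every assertion by hand. Everything will rest on the elementary identity $\pi_{\mathbb W}(q\cdot g)=P_q(\pi_{\mathbb W}(g))$, valid for all $g,q\in\mathbb G$; call it $(\star)$. It follows at once from the splitting $g=g_{\mathbb W}\cdot g_{\mathbb L}$: writing $q\cdot g_{\mathbb W}=\pi_{\mathbb W}(q\cdot g_{\mathbb W})\cdot\pi_{\mathbb L}(q\cdot g_{\mathbb W})=P_q(g_{\mathbb W})\cdot\pi_{\mathbb L}(q\cdot g_{\mathbb W})$, one gets $q\cdot g=P_q(g_{\mathbb W})\cdot\big(\pi_{\mathbb L}(q\cdot g_{\mathbb W})\cdot g_{\mathbb L}\big)$ with the last factor in $\mathbb L$, whence $\pi_{\mathbb W}(q\cdot g)=P_q(g_{\mathbb W})=P_q(\pi_{\mathbb W}(g))$. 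Applying $(\star)$ to $g=q^{-1}\cdot w$ and to $g=q\cdot w$, for $w\in\mathbb W$, gives $P_q\circ P_{q^{-1}}=P_{q^{-1}}\circ P_q=\mathrm{Id}_{\mathbb W}$; hence $P_q$ is a diffeomorphism of $\mathbb W$ with inverse $P_{q^{-1}}$, and $P_q(U)=U_q$, as recalled after \cref{def:PhiQ}. Consequently $\gamma_q=P_q\circ\gamma$ is $C^1$, takes values in $U_q$, and by $(\star)$ satisfies $\pi_{\mathbb W}(q^{-1}\cdot\gamma_q(t))=P_{q^{-1}}(P_q(\gamma(t)))=\gamma(t)$; uniqueness is immediate, since for any $C^1$ map $\tilde\gamma_q\colon[0,T]\to U_q$ the condition $\pi_{\mathbb W}(q^{-1}\cdot\tilde\gamma_q(t))=\gamma(t)$ reads, via $(\star)$, as $P_{q^{-1}}(\tilde\gamma_q(t))=\gamma(t)$, and applying $P_q$ yields $\tilde\gamma_q(t)=P_q(\gamma(t))=\gamma_q(t)$. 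Finally $\gamma_q(0)=P_q(w)=\pi_{\mathbb W}(q\cdot w)=q_{\mathbb W}\cdot q_{\mathbb L}\cdot w\cdot q_{\mathbb L}^{-1}$, the last equality holding because $\mathbb W$ is normal ($\mathbb L$ being horizontal) so that $q\cdot w=\big(q_{\mathbb W}\cdot q_{\mathbb L}\cdot w\cdot q_{\mathbb L}^{-1}\big)\cdot q_{\mathbb L}$ with the first factor in $\mathbb W$ and the second in $\mathbb L$.

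The main point is that $\gamma_q$ is an integral curve of $D^{\phi_q}$ (well-defined, $\phi_q$ being continuous on the open set $U_q$). Fix $f\in C^\infty(\mathbb W)$; since $f\circ P_q\in C^\infty(\mathbb W)$ and $\gamma$ is an integral curve of $D^\phi$, \cref{def:PhiJ} gives
\begin{align*}
\frac{d}{dt}f(\gamma_q(t))&=\frac{d}{dt}(f\circ P_q)(\gamma(t))=\big(D^{\phi}(f\circ P_q)\big)(\gamma(t))\\
&=W_{|\Phi(\gamma(t))}\big(f\circ P_q\circ\pi_{\mathbb W}\big),
\end{align*}
where $\Phi(w)=w\cdot\phi(w)$. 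By $(\star)$ one has $P_q\circ\pi_{\mathbb W}=\pi_{\mathbb W}\circ L_q$ on $\mathbb G$, where $L_q$ is the left translation by $q$; hence the inner function equals $f\circ\pi_{\mathbb W}\circ L_q$, and left-invariance of $W$ turns the last term into $W_{|q\cdot\Phi(\gamma(t))}(f\circ\pi_{\mathbb W})$. Now $q\cdot\mathrm{graph}(\phi)=\mathrm{graph}(\phi_q)$ by \cref{prop:PropertiesOfIntrinsicTranslation}(a), and $\pi_{\mathbb W}(q\cdot\Phi(w))=P_q(\pi_{\mathbb W}(\Phi(w)))=P_q(w)$ by $(\star)$, so $q\cdot\Phi(w)=\Phi_q(P_q(w))$, where $\Phi_q(a)\coloneqq a\cdot\phi_q(a)$. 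Therefore $\frac{d}{dt}f(\gamma_q(t))=W_{|\Phi_q(\gamma_q(t))}(f\circ\pi_{\mathbb W})=\big(D^{\phi_q}f\big)(\gamma_q(t))$, and since $f$ is arbitrary this is exactly $\gamma_q'(t)=D^{\phi_q}|_{\gamma_q(t)}$; together with the initial value computed above, $\gamma_q$ solves the Cauchy problem \eqref{eqn:IntegralCurveOfDPhiQ}.

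For \eqref{eqn:Invariance} I would invoke \cref{prop:PropertiesOfIntrinsicTranslation}(c): since $q_{\mathbb L}^{-1}\cdot q_{\mathbb W}^{-1}\cdot P_q(w)\cdot q_{\mathbb L}=w$, it gives $\phi_q(P_q(w))=q_{\mathbb L}\cdot\phi(w)$, hence $\phi_q(\gamma_q(t))=q_{\mathbb L}\cdot\phi(\gamma(t))$ for all $t$, and therefore
\begin{align*}
\phi_q(\gamma_q(0))^{-1}\cdot\phi_q(\gamma_q(t))&=\phi(\gamma(0))^{-1}\cdot q_{\mathbb L}^{-1}\cdot q_{\mathbb L}\cdot\phi(\gamma(t))\\
&=\phi(\gamma(0))^{-1}\cdot\phi(\gamma(t)).
\end{align*}

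I expect the only genuinely delicate point to be the second paragraph. It is tempting to argue that $\Phi\circ\gamma$ is an integral curve of the left-invariant field $W$ and to read off everything from $t\mapsto\Phi(\gamma(0))\cdot\exp(tW)$, but $\Phi\circ\gamma$ is in general only continuous: its $\mathbb L$-component is $\phi\circ\gamma$, which need not be differentiable for a merely continuous $\phi$. Running the computation instead at the level of the operator $D^\phi$ acting on test functions circumvents this, the group-theoretic content being entirely encoded in $(\star)$, the left-invariance of $W$, and \cref{prop:PropertiesOfIntrinsicTranslation}.
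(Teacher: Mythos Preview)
Your proof is correct. The paper does not actually prove this lemma: it is quoted verbatim from \cite[Lemma~3.13, and Equations (45)--(46)]{ADDDLD}, and only the statement appears here. So there is no in-paper argument to compare against, but your line of reasoning---defining $\gamma_q\coloneqq P_q\circ\gamma$, using the identity $(\star)$ to handle the $\mathbb W$-projection, and then pushing the computation through at the level of test functions via left-invariance of $W$---is exactly the natural one and matches the spirit of how \cite{ADDDLD} treats these translation invariances. Your final remark is also on point: the shortcut via $\Phi\circ\gamma$ being an integral curve of $W$ is not available for merely continuous $\phi$, and working with $D^\phi$ on test functions is precisely how one sidesteps that lack of regularity. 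One minor cosmetic point: when you write ``$P_q$ is a diffeomorphism of $\mathbb W$'', it might be worth saying explicitly that $\pi_{\mathbb W}$ (and hence $P_q$) is smooth---this is immediate since the splitting $\mathbb G=\mathbb W\cdot\mathbb L$ gives a global smooth chart, but a reader seeing only \cref{def:ComplementarySubgroups} might pause.
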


In the following proposition we prove the invariance of being a distributional solution with respect to $q$-translation. 
\begin{prop}\label{prop:translationinvariance}
	Let $\sW$ and $\sL$ be two complementary subgroups of a step-2 Carnot group $\G$ with $\sL$ one-dimensional. Let $\Omega$ be an open set in $\sW$ and let $\omega \in L^1_{\mathrm{loc}}(\Omega)$. Let us choose coordinates on $\mathbb G$ as explained in \cref{sub:step2}, see also \eqref{5.2.0.1}. Assume that for some $\ell=2,\dots,m$ the map $\phi\colon U\to \sL$ is a distributional solution of the equation $D^\phi_\ell \phi=\omega$ on $U$. Then, for every $q\in \G$, the map $\phi_q$ defined in \cref{def:PhiQ} is a distributional solution of 
	\[
	D^{\phi_q}_\ell\phi_q=\omega\circ P_{q^{-1}},
	\]   
	on the open set $P_q(\Omega)$.
\end{prop}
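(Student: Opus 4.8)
The statement is an invariance property, so the natural approach is to translate the distributional identity $D^\phi_\ell\phi=\omega$ on $U$ into the corresponding identity for $\phi_q$ on $P_q(U)=U_q$ by a change of variables implemented by the diffeomorphism $P_q\colon\sW\to\sW$. The first thing I would do is write out explicitly what it means for $\phi$ to be a distributional solution of $D^\phi_\ell\phi=\omega$: unwinding \cref{def:PhiJ} and \eqref{operatoriproiettatiinG}, the operator $D^\phi_\ell$ in coordinates on $\sW\equiv\R^{m+h-1}$ is $\partial_{x_\ell}-\sum_{i=1}^h\bigl(b^{(i)}_{\ell 1}\phi+\tfrac12\sum_{k=2}^m x_k b^{(i)}_{\ell k}\bigr)\partial_{y^*_i}$, so the equation $D^\phi_\ell\phi=\omega$ is a scalar conservation law of Burgers' type in the $(x_\ell,y^*)$-variables; its distributional meaning is the standard one, namely that for every test function $\zeta\in C^\infty_c(U)$ one has an integral identity obtained by moving the derivative onto $\zeta$ (the term $\phi\,\partial_{y^*_i}\phi$ being rewritten as $\tfrac12\partial_{y^*_i}(\phi^2)$, which is exactly why one-dimensionality of $\sL$ is essential). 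I would state this weak formulation precisely as the starting point.

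**Key steps.** (1) Recall from \cref{rem:proprietaPq} that $P_q\colon\R^{m+h-1}\to\R^{m+h-1}$ is an affine volume-preserving diffeomorphism, with $\det(\de P_q)\equiv 1$, and $P_q\circ P_{q^{-1}}=\mathrm{Id}_\sW$; in particular $U_q=P_q(U)$ and $P_{q^{-1}}(U_q)=U$. (2) Using \cref{prop:PropertiesOfIntrinsicTranslation}(c) (with $\sW$ normal, which holds since $\sL$ is horizontal) write $\phi_q(a)=q_\sL\cdot\phi\bigl(q_\sL^{-1}\cdot q_\sW^{-1}\cdot a\cdot q_\sL\bigr)$; since $\sL$ is one-dimensional this is, in the $x_1$-coordinate, $\phi_q(a)=q_1+\phi(P_{q^{-1}}(a))$ up to the correct identification, i.e. $\phi_q=q_1+\phi\circ P_{q^{-1}}$ as scalar functions on $U_q$. (3) Establish the key algebraic identity relating the projected vector fields along the graph: by \cref{lem:Invariance2IntegralCurve}, and more directly by the definition of $D^\phi_\ell$ via left-invariance of $X'_\ell$ together with $\mathrm{graph}(\phi_q)=q\cdot\mathrm{graph}(\phi)$ (\cref{prop:PropertiesOfIntrinsicTranslation}(a)), the integral curves of $D^{\phi_q}_\ell$ are exactly the $P_q$-images of those of $D^\phi_\ell$; infinitesimally this says $(\de P_q)(D^\phi_\ell|_w)=D^{\phi_q}_\ell|_{P_q(w)}$, i.e. $D^\phi_\ell$ and $D^{\phi_q}_\ell$ are $P_q$-related vector fields. (4) Given a test function $\zeta\in C^\infty_c(U_q)$, set $\tilde\zeta\coloneqq\zeta\circ P_q\in C^\infty_c(U)$; the weak formulation of $D^{\phi_q}_\ell\phi_q=\omega\circ P_{q^{-1}}$ tested against $\zeta$ becomes, after the substitution $a=P_q(w)$ (Jacobian $1$), the weak formulation of $D^\phi_\ell\phi=\omega$ tested against $\tilde\zeta$, using step (3) for the first-order term and the fact that the additive constant $q_1$ in step (2) contributes only a term of the form $\int q_1\,\partial_{y^*_i}\tilde\zeta$ plus $\int q_1\phi\,\partial_{y^*_i}\tilde\zeta$-type corrections that must be checked to cancel against the corresponding change in the potential coefficients $b^{(i)}_{\ell 1}\phi_q$ versus $b^{(i)}_{\ell 1}\phi$ and in the $x_k$-dependent coefficients (since $P_q$ shifts the $x_k$'s by $q_k$ and the $y^*_i$'s affinely). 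Since $\phi$ solves its equation weakly, the right-hand side matches $\int\omega\,\tilde\zeta=\int(\omega\circ P_{q^{-1}})\,\zeta\circ P_q\circ P_q^{-1}$, wait — more precisely $\int_U\omega\,\tilde\zeta\,\de w=\int_{U_q}(\omega\circ P_{q^{-1}})\,\zeta\,\de a$, which is exactly the desired right-hand side.

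**Main obstacle.** The routine change-of-variables bookkeeping is step (4): one must verify that the nonlinear term $\tfrac12\partial_{y^*_i}(\phi_q^2)$ and the linear drift terms transform consistently under $P_q$, i.e. that the genuinely quadratic-in-$\phi$ pieces, the $q_1$-linear pieces, and the $x_k$-coefficient pieces all reassemble into the single identity $D^\phi_\ell\phi=\omega$. This is where the specific skew-symmetric structure of the $\mathcal B^{(i)}$ and the explicit formula \eqref{eqn:ExpressionOfPqw} for $P_q$ enter, and it is the only place requiring real computation; I would organize it by first proving the clean vector-field relation in step (3) (which packages the hard algebra into one statement that also follows from \cref{lem:Invariance2IntegralCurve}), and then the distributional statement becomes a formal adjoint computation: if $D^{\phi_q}_\ell$ and $D^\phi_\ell$ are $P_q$-related and $P_q$ is volume-preserving, then the formal adjoints satisfy $(D^{\phi_q}_\ell)^*(\zeta)=\bigl((D^\phi_\ell)^*(\zeta\circ P_q)\bigr)\circ P_{q^{-1}}$ up to the divergence term, which here vanishes because both operators have the form $\partial_{x_\ell}+(\text{divergence-free drift in }y^*)$. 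Modulo that adjoint identity, testing against $\zeta$ and substituting $a=P_q(w)$ closes the argument; I would keep the presentation at the level of "reduce to the $P_q$-relatedness of the operators, then integrate by parts," relegating the coefficient-matching to a short verification.
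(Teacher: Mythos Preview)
Your approach is essentially the paper's: write $\phi_q=q_1+\phi\circ P_{q^{-1}}$ via \cref{prop:PropertiesOfIntrinsicTranslation}(c), expand the weak formulation of $D^{\phi_q}_\ell\phi_q$ against a test $\xi\in C^\infty_c(P_q(\Omega))$, change variables $w'=P_{q^{-1}}(w)$ using $\det(\de P_q)\equiv1$, and reduce to the weak formulation of $D^\phi_\ell\phi$ against $\xi\circ P_q$. The paper does this by direct coefficient bookkeeping (chain rule for $\partial(\xi\circ P_q)$ from \eqref{eq:componentidifferenziale}, then cancellation via skew-symmetry of the $\mathcal B^{(i)}$), without invoking the $P_q$-relatedness of the vector fields as a separate lemma.

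One caveat on your proposed shortcut: the claim that ``both operators have the form $\partial_{x_\ell}+(\text{divergence-free drift in }y^*)$'' is not correct --- the drift coefficient $b^{(i)}_{\ell 1}\phi$ depends on $y^*$ through $\phi$, so $D^\phi_\ell$ is \emph{not} divergence-free as a vector field on $\sW$. What makes the distributional formulation well posed is not vanishing divergence of the operator but the conservation-law structure $\phi\,\partial_{y^*_i}\phi=\tfrac12\partial_{y^*_i}(\phi^2)$ of the nonlinear \emph{expression} $D^\phi_\ell\phi$ (which you do note). Consequently the ``formal adjoint'' packaging does not actually bypass the computation: expanding $\phi_q^2=q_1^2+2q_1(\phi\circ P_{q^{-1}})+(\phi\circ P_{q^{-1}})^2$ produces a cross term that must be matched by hand against the shift $x_k\mapsto x_k+q_k$ in the linear drift, and that matching (your ``short verification'') is exactly the substance of the paper's proof.
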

\begin{proof}
	By item (c) of \cref{prop:PropertiesOfIntrinsicTranslation}, we know that in exponential coordinates $\phi_q(w)=q_1+\phi(P_{q^{-1}}(w))$, where $P_{q^{-1}}$ is defined in \eqref{eq:definizionePq}. Indeed, since $\mathbb W$ is normal, the following equality holds
	\begin{equation}\label{eqn:Comp1}
	P_{q^{-1}}(w)=\pi_\sW(q^{-1}\cdot w)=\pi_\sW((q_{\mathbb L})^{-1}\cdot (q_{\mathbb W})^{-1}\cdot w\cdot q_{\mathbb L}\cdot (q_{\mathbb L})^{-1})=q^{-1}\cdot w \cdot q_{\mathbb L}.
	\end{equation}
	Moreover we claim $P_{q^{-1}}=P_q^{-1}$, for all $q\in\mathbb G$. Indeed, since $\mathbb W$ is normal, the following equality holds
	\begin{equation}\label{eqn:Comp2}
	P_{q}(w)=\pi_\sW(q\cdot w) = \pi_\sW(q\cdot w\cdot q^{-1}\cdot q_{\sW}\cdot q_\sL)=q\cdot w\cdot q^{-1}\cdot q_{\sW}=q\cdot w\cdot (q_\sL)^{-1},
	\end{equation} 
	and hence it is clear by comparing \eqref{eqn:Comp1} and \eqref{eqn:Comp2} that $P_q\circ P_{q^{-1}}=P_{q^{-1}}\circ P_q=\mathrm{Id}_\sW$, for all $q\in\mathbb G$. Moreover notice that from item (c) of \cref{prop:PropertiesOfIntrinsicTranslation} and \eqref{eqn:Comp2} we get that 
	\begin{equation}\label{eqn:PqOmega}
	P_q(\Omega)=\Omega_q, \qquad \text{for all $q\in\mathbb G$}.
	\end{equation} 
	For every $\xi\in C_c^{\infty}(P_q(\Omega))$, using \eqref{operatoriproiettatiinG} we can write the action of the distribution $D_\ell^{\phi_q}\phi_q$ on $\xi$, where we mean that the coordinates are $w=(x_2,\dots,x_m,y_1^*,\dots,y_h^*)\in\mathbb W$, as follows
	\begin{equation}\label{eq:contodistribuzione}
	\begin{aligned}
	\langle D^{\phi_q}_\ell \phi_q, \xi\rangle&=\int_{P_q(\Omega)}\left(-\phi_q\frac{\partial \xi}{\partial x_\ell} +\frac 12\sum_{i=1}^hb_{\ell1}^{(i)}\phi_q^2\frac{\partial \xi}{\partial y^*_{i}}+\frac12\sum_{i=1}^h\sum_{j=2}^mx_jb^{(i)}_{\ell j}\phi_q\frac{\partial \xi}{\partial y^*_{i}}\right)\de w\\&
	=\int_{P_q(\Omega)}\left(-(q_1+\phi\circ P_{q^{-1}})\frac{\partial \xi}{\partial x_\ell} +\frac 12\sum_{i=1}^hb_{\ell1}^{(i)}(q_1+\phi\circ P_{q^{-1}})^2\frac{\partial \xi}{\partial y^*_{i}}\right)\de w\\
	&\hphantom{=}+\int_{P_q(\Omega)}\left(\frac12\sum_{i=1}^h\sum_{j=2}^mx_jb^{(i)}_{\ell j}(q_1+\phi\circ P_{q^{-1}})\frac{\partial \xi}{\partial y^*_{n+i}}\right)\de w\\
	&=\int_{P_q(\Omega)}\left(-\phi\circ P_{q^{-1}}\frac{\partial \xi}{\partial x_\ell} +\sum_{i=1}^hb_{\ell1}^{(i)}\left[\frac 12(\phi\circ P_{q^{-1}})^2+q_1\phi\circ P_{q^{-1}}\right]\frac{\partial \xi}{\partial y^*_{i}}\right)\de w\\
	&\hphantom{=}+\int_{P_q(\Omega)}\left(\frac12\sum_{i=1}^h\sum_{j=2}^mx_jb^{(i)}_{\ell j}(\phi\circ P_{q^{-1}})\frac{\partial \xi}{\partial y^*_{i}}\right)\de w,
	\end{aligned}
	\end{equation}
	where in the third equality we used the fact that $\xi$ has compact support in $P_q(\Omega)$ and $q_1$ does not depend on $w$. Taking into account that, by \cref{rem:proprietaPq}, $\det (\de P_{q})=1$ everywhere on $\mathbb W$, we perform in \eqref{eq:contodistribuzione} the change of variable $w'=P_{q^{-1}}(w)$. Thus, recalling that $P_q\circ P_{q^{-1}} = \mathrm{Id}_{\mathbb W}$, and by exploiting \eqref{eqn:ExpressionOfPqw}, we obtain the following equality
	\begin{equation}\label{eq:contoprimachainrule}
	\begin{aligned}
	\langle D^{\phi_q}_\ell \phi_q, \xi\rangle&=\int_{\Omega}\left(-\phi\frac{\partial \xi}{\partial x_\ell}\circ P_q+\sum_{i=1}^hb_{\ell1}^{(i)}\left[\frac 12\phi^2+q_1\phi\right]\frac{\partial \xi}{\partial y^*_{i}}\circ P_q\right)\de w\\
	&\hphantom{=}+\int_{\Omega}\left(\frac12\sum_{i=1}^h\sum_{j=2}^m(x_j+q_j)b^{(i)}_{\ell j}\phi\frac{\partial \xi}{\partial y^*_{i}}\circ P_q\right)\de w.
	\end{aligned}
	\end{equation}
	We can now use \eqref{eq:componentidifferenziale} to compute the derivatives of $\xi\circ P_q$ as follows
	\begin{equation}\label{eq:chainrule}
	\begin{aligned}
	&\frac{\partial}{\partial x_\ell}(\xi\circ P_q)=\frac{\partial \xi}{\partial x_\ell}\circ P_q+\sum_{i=1}^h\left(\sum_{j=2}^m\frac12 b^{(i)}_{j\ell}q_j+b_{1\ell }^{(i)}q_1\right)\frac{\partial \xi}{\partial y^*_{i}}\circ P_q,\hspace{0.75\textwidth}\negphantom{$\frac{\partial}{\partial w_\ell}(\xi\circ P_q)=\frac{\partial \xi}{\partial w_\ell}\circ P_q+\sum_{i=1}^h\left(\sum_{j=2}^m\frac12 b^{(i)}_{j\ell}q_j+b_{1\ell }^{(i)}q_1\right)\frac{\partial \xi}{\partial w_{n+i}}\circ P_q,$} \forall \ell=2,\dots,m,\\&
	\frac{\partial}{\partial y^*_{i}}(\xi\circ P_q)=\frac{\partial \xi}{\partial y^*_{i}}\circ P_q, \hspace{0.73\textwidth}\negphantom{$\frac{\partial}{\partial w_{n+i}}(\xi\circ P_q)=\frac{\partial \xi}{\partial w_{n+i}}\circ P_q,$}\quad\! \forall i=1,\dots, h.
	\end{aligned}
	\end{equation}
	By using \eqref{eq:chainrule} into \eqref{eq:contoprimachainrule} we get
	\[
	\begin{aligned}
	\langle D^{\phi_q}_\ell \phi_q, \xi\rangle&=\int_\Omega -\phi\left(\frac{\partial (\xi\circ P_q)}{\partial x_\ell}-\sum_{i=1}^h\left(\sum_{j=2}^m\frac12 b^{(i)}_{j\ell}q_j+b_{1\ell }^{(i)}q_1\right)\frac{\partial (\xi\circ P_q)}{\partial y^*_{i}}\right)\de  w\\&\hphantom{=} +\int_\Omega \sum_{i=1}^h\left(b_{\ell1}^{(i)}\left[\frac 12\phi^2+q_1\phi\right]+\frac12 \sum_{j=2}^m(x_j+q_j)b_{\ell j}^{(i)}\phi\right)\frac{\partial (\xi\circ P_q)}{\partial y^*_{i}}\de w\\
	&=\int_\Omega \left(-\phi\frac{\partial (\xi\circ P_q)}{\partial x_\ell}+\frac12\sum_{i=1}^hb_{\ell1}^{(i)}\phi^2\frac{\partial (\xi\circ P_q)}{\partial y^*_{i}}+\frac 12\sum_{i=1}^h\sum_{j=2}^mx_jb_{\ell j}^{(i)}\phi \frac{\partial(\xi\circ P_q)}{\partial y^*_{i}}\right) \de w\\&= \langle D^\phi_\ell \phi, \xi\circ P_q\rangle,
	\end{aligned}
	\]
	where, in order to write the second equality, we used that the matrices $\mathcal{B}$ are skew-symmetric.
	Now, exploiting the last identity, the assumption  and the fact that for every $\xi\in C^{\infty}_c(P_q(\Omega))$ we have $\xi\circ P_q\in C^{\infty}_c(\Omega)$, we conclude
	\[
	\langle D^{\phi_q}_\ell \phi_q, \xi\rangle=\langle D^{\phi}_\ell \phi, \xi\circ P_q\rangle=\int_{\Omega} \omega(\xi\circ P_q)\de w=\int_{P_q(\Omega)} (\omega\circ P_{q^{-1}}) \xi \de w,
	\]
	where in the last equality we changed the variables and exploited the fact that $\det(\de P(q))=1$. 
	By the arbitrariness of $\xi\in C^{\infty}_c(P_q(\omega))$, the proof is complete.
\end{proof}
\subsection{Broad, broad* and distributional solutions of $D^{\phi}\phi=\omega$}
We recall the following definition as a particular case of  \cite[Section 3.4, Definition 3.24]{ADDDLD}. For discussions about the dependence of the definition of broad* regularity on the chosen adapted basis, we refer the reader to \cite[Remark 3.26 \& Remark 4.4]{ADDDLD}.
\begin{defi}[Broad*, broad and distributional solutions]\label{defbroad*}
	Let $\mathbb W$ and $\mathbb L$ be complementary subgroups of a Carnot group $\mathbb G$, with $\mathbb L$ one-dimensional. Let $ U\subseteq \mathbb W$ be open and let $\phi\colon U\to\mathbb L$ be a continuous function. Consider an adapted basis $(X_1,\dots, X_n)$ of the Lie algebra of $\mathbb G$ such that $\mathbb L=\exp({\rm span}\{X_{1}\})$ and $\mathbb W=\exp({\rm span}\{X_{2},\dots,X_n\})$. Let $\omega\coloneqq\left(\omega_{j}\right)_{j=2,\dots,m}\colon U \to \R^{m-1}$  be a continuous function. Up to identifying $\mathbb L$ with $\mathbb R$ by means of exponential coordinates, we say that $\phi \in C(U)$ is a \emph{broad* solution} of $D^\phi \phi=\omega$ in $U$ if for every $a_0\in U$ there exist $0< \delta_2 < \delta_1$ such that $\overline{B(a_0,\delta_1)}\cap\mathbb W\subseteq U$ and there exist $m-1$ maps $E_j^\phi\colon (\overline{B(a_0,\delta_2)}\cap\mathbb W) \times[-\delta _2,\delta _2]\to \overline{B(a_0,\delta_1)}\cap\mathbb W$ for $j=2,\dots, m$, satisfying the following two properties.
	\begin{itemize}
		\item[(a)] For every $a\in \overline{B(a_0, \delta_2)}\cap\mathbb W$ and every $j=2,\dots, m$, the map $E_j^\phi(a)\coloneqq E_j^\phi(a,\cdot)$ is $C^1$-regular and it is a solution of the Cauchy problem
		\[
		\begin{cases}
		\dot \gamma= D^\phi_j\circ \gamma,&\\
		\gamma(0)=a,&
		\end{cases}
		\]
		in the interval $[-\delta_2,\delta_2]$, where the vector field $D^\phi_j\coloneqq D^{\phi}_{X_j}$ is defined in \eqref{eqn:DefinitionOfDj}.
		\item[(b)] For every $a\in \overline{B(a_0,\delta_2)}\cap\mathbb W$, for every $t\in [-\delta_2,\delta_2]$, and every $j=2,\dots,m$ one has
		\[
		\phi(E_j^\phi(a,t))-\phi(a)=\int_0^t \omega_{j} (E_j^\phi(a,s))\,\de s.
		\]
	\end{itemize}
	Moreover, we say that $D^{\phi}\phi=\omega$ in the {\em broad sense on $U$} if for every $W\in {\rm Lie}(\mathbb W)\cap V_1$ and every $\gamma\colon I\to U$ integral curve of $D^{\phi}_W$, it holds that 
	$$
	\frac{\de}{\de s}_{|_{s=t}} (\phi\circ\gamma)(s)=\langle\omega(\gamma(t)),W\rangle , \qquad \forall t\in I,
	$$
	where by $\langle \omega,W\rangle$ is the standard scalar product on $\mathbb R^{m-1}$ in exponential coordinates. 
	
	Finally, let us notice that, for every $j=2,\dots m$, $D^{\phi}_j$ is a continuous vector field with coefficients that might depend polinomially on $\phi$ and on some of the coordinates, see \cref{sub:projected} for the case in which $\mathbb G$ is of step 2 and \cite[Proposition 3.9]{ADDDLD} for the general case. We say that $D^{\phi}\phi=\omega$ holds in the {\em  distributional sense} on $U$ if for every $j=2,\dots,m$ one has $D^{\phi}_j\phi=\omega_j$ in the distributional sense. Notice that the distribution $(D^{\phi}_j)\phi$ is well defined since the coefficients of $D^{\phi}_j$ just contain polynomial terms in $\phi$ and terms depending on the coordinates, see also \cite[Item (a) of Proposition 4.10]{ADDDLD}.
\end{defi}

\section{Reduction of the main theorems to free Carnot groups of step 2}\label{sec:INV}
In this section we analyze the link between distributional and broad* solutions to $D^{\phi}\phi=\omega$ with a continuous datum $\omega$.
We first show that a distributional solution of $D^{\phi}\phi=\omega$ with a continuous datum $\omega$ is a broad* solution inside free Carnot groups $\mathbb F$ of step 2, see \cref{prop:distrsopraimplicabroadsopra} (b). In this proof, a crucial role is played by the particular structure of the projected vector fields inside free Carnot groups of step 2, which produces Burgers' type operators in higher dimensions, see \eqref{operatoriproiettatiinF}. Indeed, combining the invariance result in \cref{prop:translationinvariance} and the dimensional reduction of \cref{lem:dimensionalreduction}, we can reduce ourselves to deal with Burgers' distributional equation with continuous datum on the first Heisenberg group $\HH ^1$, and then exploit the arguments used by Dafermos in \cite{Dafermos} and by Bigolin and Serra Cassano in \cite{BSC}. 

Secondly, by the explicit expression of the projection $\pi$ from $\mathbb F$ to a Carnot group $\G$ of step 2, we prove that being a distributional solution to $D^{\phi}\phi=\omega$ on $\mathbb G$ lifts  to $\mathbb F$, see \cref{prop:distribuzionaleinGimplicainF}. Finally, \cref{broadinGimplicabroadinF} states that the notion of broad* solution is preserved by $\pi$, i.e., a broad* solution on $\mathbb F$ becomes a broad* solution on $\mathbb G$. The resulting strategy resembles the one used in \cite{LDPS19}.
\begin{lem}\label{lem:dimensionalreduction}
	Let $n_1,n_2\in \mathbb N$ and let $\Omega$ be an open set in $ \R^{n_1+n_2+k}$. Let $f_0,f_1,\dots, f_{n_1}\in C(\Omega)$ and assume that, for every $\phi\in C_c^\infty(\Omega)$, one has
	\[
	\iiint \left(\sum_{i=1}^{n_1} f_i(x,y,z)\frac{\partial\phi}{\partial x_i}(x,y,z)+f_0(x,y,z)\phi(x,y,z)\right)\de x\de y\de z=0,
	\]
	where $(x,y,z)\in \R^{n_1}\times \R^{n_2}\times \R^k$.
	Then, for every $z_0\in \R^k$ such that $ \Omega_0\coloneqq \{(x,y)\in \R^{n_1}\times \R^{n_2}: (x,y,z_0)\in \Omega \}$ is nonempty, and any $\phi\in C_c^\infty (\Omega_0)$, one has
	\[
	\iint \left(\sum_{i=1}^{n_1} f_i(x,y,z_0)\frac{\partial\phi}{\partial x_i}(x,y)+f_0(x,y,z_0)\phi(x,y)\right)\de x\de y=0.
	\]
\end{lem}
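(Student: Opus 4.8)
The plan is to prove the claimed ``slicing'' property by testing the $n_1+n_2+k$-dimensional identity against functions of the form $\xi(x,y)\rho_\eps(z)$, where $\rho_\eps$ is a mollifier concentrating at $z_0$, and then letting $\eps\to 0$. Concretely, fix $z_0$ with $\Omega_0\neq\emptyset$ and fix $\xi\in C_c^\infty(\Omega_0)$. Since $\Omega$ is open and $\mathrm{supp}(\xi)\times\{z_0\}$ is a compact subset of $\Omega$, there is $\eps_0>0$ such that $\mathrm{supp}(\xi)\times \overline{B(z_0,\eps_0)}\subseteq\Omega$; hence for $\eps<\eps_0$ the function $\phi_\eps(x,y,z)\coloneqq \xi(x,y)\rho_\eps(z-z_0)$ lies in $C_c^\infty(\Omega)$ and is an admissible test function. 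Plugging it in gives
\[
\iiint\Bigl(\sum_{i=1}^{n_1} f_i(x,y,z)\,\frac{\partial\xi}{\partial x_i}(x,y)\,\rho_\eps(z-z_0)+f_0(x,y,z)\,\xi(x,y)\,\rho_\eps(z-z_0)\Bigr)\de x\de y\de z=0
\]
for every $\eps<\eps_0$, because $\partial\phi_\eps/\partial x_i=(\partial\xi/\partial x_i)\,\rho_\eps$.

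Next I would pass to the limit $\eps\to 0$. By Fubini the left-hand side equals $\int \rho_\eps(z-z_0)\, g(z)\,\de z$ with $g(z)\coloneqq \iint\bigl(\sum_i f_i(x,y,z)\,\partial_{x_i}\xi(x,y)+f_0(x,y,z)\,\xi(x,y)\bigr)\de x\de y$. The key point is that $g$ is continuous at $z_0$: this follows from the continuity of $f_0,f_1,\dots,f_{n_1}$ on the compact neighborhood $\mathrm{supp}(\xi)\times\overline{B(z_0,\eps_0)}$ (uniform continuity there), together with the fact that the $(x,y)$-integration is over the fixed compact set $\mathrm{supp}(\xi)$. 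Therefore $\int\rho_\eps(z-z_0)g(z)\,\de z\to g(z_0)$ as $\eps\to 0$, and since the integral is identically $0$ we conclude $g(z_0)=0$, which is exactly the desired identity on $\Omega_0$. By arbitrariness of $\xi\in C_c^\infty(\Omega_0)$ the proof is complete.

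The only mild subtlety — and the step I would be most careful about — is the bookkeeping that makes $\phi_\eps$ genuinely compactly supported in $\Omega$: one needs the product-neighborhood $\mathrm{supp}(\xi)\times\overline{B(z_0,\eps_0)}\subseteq\Omega$, which is available by compactness of $\mathrm{supp}(\xi)\times\{z_0\}$ and openness of $\Omega$ (a standard tube-lemma argument), and one must restrict to $\eps<\eps_0$ throughout. Everything else is routine: the interchange of integrals is justified by continuity and compact support, and the convergence $\int\rho_\eps(\cdot-z_0)g\to g(z_0)$ is the standard approximate-identity property for $g$ continuous near $z_0$. I would state the mollifier $\rho_\eps$ as a nonnegative, compactly supported, mass-one smooth kernel with $\mathrm{supp}(\rho_\eps)\subseteq B(0,\eps)$, so that no further regularization hypotheses are needed.
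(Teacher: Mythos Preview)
Your proof is correct and follows essentially the same approach as the paper: both test against $\xi(x,y)$ times a bump in $z$ concentrating at $z_0$, apply Fubini to obtain $\int \rho_\eps(z-z_0)\,g(z)\,\de z=0$ with $g$ continuous, and pass to the limit. The only cosmetic differences are that the paper first reduces to $k=1$ by iteration and uses a plateau cutoff $\tfrac{1}{2\eps}\phi_0^\eps$ rather than a standard mollifier.
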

\begin{proof}
	By translation invariance, we can assume without loss of generality that $z_0=0$. Up to iterate the argument $k$ times, we can also assume without loss of generality that $k=1$. Fix $\widehat\phi\coloneqq\widehat\phi(x,y)$ be such that  ${\rm supt}(\widehat \phi)\subseteq \Omega_0$.
	Choose $\epsilon_0>0$ small enough and consider, for any $\epsilon\in (0,\epsilon_0]$, the map $\phi^\epsilon(x,y,z)\coloneqq\frac{1}{2\epsilon}\phi_0^\epsilon(z)\widehat\phi(x,y)$ with ${\rm supt}(\phi_0^\epsilon)\subseteq [-\epsilon-\epsilon^2,\epsilon+\epsilon^2]$, $\phi_0^\epsilon\geq0$ and $\phi_0^\epsilon=1$ on $[-\epsilon, \epsilon]$, and such that $ {\rm supt}(\widehat \phi)\times{\rm supt}(\phi_0^{\epsilon_0})\subseteq \Omega$. Then, by the hypothesis and Fubini's Theorem we may write
	\begin{equation}\label{eq:fubini}
	\frac{1}{2\varepsilon}\int_{-\epsilon-\epsilon^2}^{\epsilon+\epsilon^2} \phi_0^\varepsilon(z)\left(\iint \sum_{i=1}^{n_1} f_i(x,y,z)\frac{\partial\widehat\phi}{\partial x_i}(x,y)+f_0(x,y,z)\widehat\phi(x,y) \de x\de y\right)\de z=0.
	\end{equation}
	Notice that the function 
	\[
	F(z)\coloneqq\iint \sum_{i=1}^{n_1} f_i(x,y,z)\frac{\partial\widehat\phi}{\partial x_i}(x,y)+f_0(x,y,z)\widehat\phi(x,y) \de x\de y,
	\]
	is continuous on $[-\epsilon_0-\epsilon_0^2,\epsilon_0+\epsilon_0^2]$. We can then decompose the left-hand side of \eqref{eq:fubini} in the following way
	\begin{equation}\label{eq:primomembrospezzato}\begin{aligned}
	\frac{1}{2\varepsilon}\int_{-\epsilon-\epsilon^2}^{\epsilon+\epsilon^2} \phi_0^\varepsilon(z)F(z)\de z=\frac{1}{2\varepsilon}\int_{-\epsilon}^{\epsilon}F(z)\de z+ \frac{1}{2\varepsilon}\int_{[-\epsilon-\epsilon^2,\epsilon+\epsilon^2]\setminus[-\epsilon,\epsilon]}\phi_0^\varepsilon(z)F(z)\de z.
	\end{aligned}
	\end{equation}
	Since $\phi_0^\epsilon\leq 1$, we have 
	\[
	\left|\frac 1{2\epsilon} \int_{[-\epsilon-\epsilon^2,\epsilon+\epsilon^2]\setminus[-\epsilon,\epsilon]}\phi_0^\varepsilon(z)F(z)\de z\right|\leq M \frac{\epsilon^2}{\epsilon}=M \epsilon,
	\]
	where $M$ is the maximum of $|F|$ in $[-\epsilon_0-\epsilon_0^2,\epsilon_0+\epsilon_0^2]$.
	Letting $\epsilon\to 0$ in \eqref{eq:primomembrospezzato}, the thesis follows by means of Lebesgue's Theorem.
\end{proof}

\begin{prop}\label{prop:distrsopraimplicabroadsopra}
Let $\mathbb F$ be a free Carnot group of step 2, rank $m$ and topological dimension $n$, and let $\mathbb W$ and $\mathbb L$ be two complementary subgroups of $\mathbb F$ such that $\mathbb L$ is one-dimensional. Let $\Omega$ be an open subset of $\mathbb W$ and $\psi\colon\Omega\to\mathbb L$ be a continuous function. Choose an adapted basis and exponential coordinates on $\mathbb F$ as in \cref{sub:freestep2}, see also \eqref{5.2.0}. Assume there exists $\omega\in C(\Omega;\R^{m-1})$ such that $D^\psi\psi=\omega$ holds in the distributional sense on $\Omega$. Then, the following facts hold.
\begin{itemize} 
	\item[(a)] For every $j=2,\dots,m$ and for every integral curve $\gamma\colon[0,T]\to \Omega$ of $D^\psi_j$, the map $\psi\circ \gamma\colon [0,T]\to \mathbb L$ is Lipschitz and the Lipschitz constant only depends on $j$ and $\omega$.
	\item[(b)] $D^\psi\psi=\omega$ holds in the broad* sense on $\Omega$.
\end{itemize}
\end{prop}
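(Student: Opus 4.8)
\emph{The plan.} I would treat the $m-1$ scalar equations one at a time and, for each fixed $j\in\{2,\dots,m\}$, reduce the $j$-th equation to a genuine scalar Burgers equation with continuous datum on a two-dimensional slice, and then invoke the theory of continuous solutions of Burgers' equation on $\HH^1$ of Dafermos \cite{Dafermos} and Bigolin--Serra Cassano \cite{BSC}. The first step is to observe that, in the \emph{free} group, the only troublesome term of $D^\psi_j$ in \eqref{operatoriproiettatiinF} is the Burgers nonlinearity $-\psi\,\partial_{y_{j1}}$: the remaining part $\tfrac12\sum_{j<\ell\le m}x_\ell\partial_{y_{\ell j}}-\tfrac12\sum_{1<s<j}x_s\partial_{y_{js}}$ is linear and exact, since by \eqref{eq:ODEverticali} the coordinates $y_{\ell j},y_{js}$ move affinely along the flow of $D^\psi_j$. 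Hence the measure-preserving polynomial change of coordinates on $\mathbb W\equiv\R^{n-1}$ that replaces $y_{\ell j}$ by $y_{\ell j}-\tfrac12 x_\ell x_j$ (for $j<\ell\le m$) and $y_{js}$ by $y_{js}+\tfrac12 x_s x_j$ (for $1<s<j$), leaving all other coordinates fixed, conjugates $D^\psi_j$ into the pure Burgers operator $\partial_{x_j}-\widetilde\psi\,\partial_{y_{j1}}$, and turns the hypothesis $D^\psi_j\psi=\omega_j$ in $\mathcal D'$ into $\partial_{x_j}\widetilde\psi-\tfrac12\partial_{y_{j1}}(\widetilde\psi^2)=\widetilde\omega_j$ in $\mathcal D'$, with $\widetilde\psi,\widetilde\omega_j$ continuous. (One could instead get here by two successive applications of \cref{lem:dimensionalreduction} interleaved with a linear straightening, after first freezing the transverse horizontal coordinates; and one uses \cref{prop:translationinvariance} anyway, in part (b), to place the base point at the identity.)

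In this transformed equation only $x_j$ and $y_{j1}$ carry derivatives, the remaining $n-3$ coordinates of $\mathbb W$ being passive. Applying \cref{lem:dimensionalreduction} --- with $n_1=2$ and the passive coordinates in the role of $z$ --- then gives, for every value $z_0$ of the passive coordinates with nonempty slice $\Omega_{z_0}\subseteq\R^2$, that the continuous function $u:=\widetilde\psi(\cdot,\cdot,z_0)$ is a distributional solution on $\Omega_{z_0}$ of the scalar Burgers equation $\partial_{x_j}u-\tfrac12\partial_{y_{j1}}(u^2)=g$ with $g:=\widetilde\omega_j(\cdot,\cdot,z_0)$ continuous. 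Moreover, by \eqref{eq:ODEverticali}, in the new coordinates every integral curve of $D^\psi_j$ lies in one such slice and its $(x_j,y_{j1})$-component is precisely a characteristic $\dot y_{j1}=-u(x_j,y_{j1})$ of that equation, and conversely every such characteristic lifts to an integral curve of $D^\psi_j$.

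Part (a) then follows at once: an integral curve $\gamma\colon[0,T]\to\Omega$ of $D^\psi_j$ lives on a slice and its $(x_j,y_{j1})$-part is a characteristic, so the a priori Lipschitz estimate for continuous solutions of Burgers' equation along characteristics --- \cite[Eqq.~(3.4)--(3.5)]{Dafermos}, see also \cite[Step~1, proof of Theorem~1.2]{BSC} --- shows that $u$, hence $\psi$, is Lipschitz along $\gamma$ with constant $\le\sup|g|\le\|\omega_j\|_{L^\infty}$, in particular depending only on $j$ and $\omega$. For part (b), I would fix $a_0\in\Omega$, reduce the base point to the identity via \cref{prop:translationinvariance} and item~(d) of \cref{prop:PropertiesOfIntrinsicTranslation}, and for each $j$ pass to the Burgers coordinates above, so that $a_0$ sits on a slice carrying the equation $\partial_{x_j}u-\tfrac12\partial_{y_{j1}}(u^2)=g$, $g$ continuous. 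On that slice I would run the construction of \cite[Step~1, proof of Theorem~1.2]{BSC}: mollify $u$, solve the $C^1$ characteristic ODEs of the mollified equation (with $C^1$ dependence on the initial point), and use the uniform Lipschitz bound of the previous paragraph to extract, by Ascoli--Arzel\`a, a limiting family of characteristics that are $C^1$ integral curves of $\dot y_{j1}=-u$ and along which the Fundamental Theorem of Calculus with derivative $g$ holds, the error terms vanishing as in \cite{Dafermos}. Pulling these families back through the coordinate change, transferring them to $\psi$ and the original base point via \cref{lem:Invariance2IntegralCurve}, and choosing $\delta_2:=\min_j\delta_2^{(j)}$ small enough that all the resulting curves stay inside a common ball $B(a_0,\delta_1)$ with $\overline{B(a_0,\delta_1)}\cap\mathbb W\subseteq\Omega$, produces the maps $E_j^\psi$ required by \cref{defbroad*}.

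The hard part is not any single step but the reduction as a whole: the crucial realisation is that, once lifted to the \emph{free} step-$2$ group, each scalar equation of the system decouples --- after the $j$-dependent measure-preserving coordinate change --- into a genuinely two-dimensional Burgers equation on affine slices, which is precisely what makes \cref{lem:dimensionalreduction} and the one-dimensional theory applicable. The delicate analytic input, which I would import verbatim from \cite{Dafermos,BSC}, is that a merely continuous $u$ cannot be differentiated along characteristics directly: to push the Fundamental Theorem of Calculus through the mollification one must control the quadratic commutator $\partial_{y_{j1}}\!\big((u^2)*\rho_\epsilon-(u*\rho_\epsilon)^2\big)$ along the mollified characteristics, which is what the Dafermos computation (3.4)--(3.5) supplies. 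A smaller bookkeeping nuisance is that the coordinate change depends on $j$, so the reductions for the various components must be done separately and the radii in (b) made uniform only at the end.
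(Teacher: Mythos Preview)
Your proposal is correct and follows the same overall strategy as the paper: reduce each scalar equation $D^\psi_j\psi=\omega_j$ to a genuine two-dimensional Burgers equation on slices, then invoke Dafermos's Lipschitz estimates along characteristics for (a) and the Bigolin--Serra Cassano construction for (b). The tactical difference lies in how the reduction is carried out. You straighten the linear drift of $D^\psi_j$ globally via the measure-preserving shear $y_{\ell j}\mapsto y_{\ell j}-\tfrac12 x_\ell x_j$, $y_{js}\mapsto y_{js}+\tfrac12 x_s x_j$, after which a single application of \cref{lem:dimensionalreduction} suffices and every integral curve of $D^\psi_j$ manifestly lies on a two-dimensional slice; this lets you prove part (a) for an arbitrary starting point without invoking \cref{prop:translationinvariance}. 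The paper instead first translates the base point to $0$ with $\psi(0)=0$ via \cref{prop:translationinvariance}, and then applies \cref{lem:dimensionalreduction} twice: once to freeze the transverse horizontal coordinates at $0$ (which annihilates the linear drift since its coefficients $x_\ell,x_s$ vanish there), and once more to freeze the remaining vertical coordinates. Your route is a bit more direct and avoids the separate translation step in (a); the paper's route stays closer to the group structure and reuses the translation mechanism that is needed anyway in (b).

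One point to tighten in your write-up of (b): the broad* definition requires a curve through \emph{each} $a\in \overline{B(a_0,\delta_2)}\cap\mathbb W$, so the BSC construction must be run on the slice through each such $a$, not only through $a_0$; the paper achieves this by translating each $w$ (your $a$) individually to the origin and then checking that the resulting $\delta_2$ is bounded below uniformly in $w$ via a bound on $M_q$. Your shear makes this bookkeeping lighter (the slices are now labelled by the passive coordinates of $a$ and $\delta_2$ is controlled by $\sup|\widetilde\psi|$ on a fixed neighbourhood), but you should state the uniformity explicitly rather than only taking $\min_j\delta_2^{(j)}$.
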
	

\begin{proof}
	{\bf Preliminary dimensional reduction}. Fix $j=2,\dots, m$. Assume $0\in \Omega$, $\psi(0)=0$ and $D^\psi_j\psi=\omega_j$ in the sense of distributions on $\Omega$. Taking \eqref{operatoriproiettatiinF} into account, this amounts to saying that
	\begin{equation}\label{eq:distribuzionale}
	\int \left(-\psi \partial_{x_j}\phi+ \frac{\psi^2}2\partial_{y_{j1}}\phi-\frac12\sum_{j<\ell\leq m} x_\ell\psi\partial_{y_{\ell j}}\phi+\frac 12\sum_{1<s<j} x_s\psi\partial_{y_{js}}\phi\right)\de \mathscr L^{n-1}=\int\phi\omega_j\de\mathscr L^{n-1},
	\end{equation}
	for every $\phi \in C_c^\infty(\Omega)$. 
	Since $\psi$ and $\omega $ are continuous, we are in a position to apply \cref{lem:dimensionalreduction} to the variables $z=(x_2,\dots, x_{j-1}, x_{j+1}, \dots, x_m)$ at $x_2^0=\dots=x_{j-1}^0=x^0_{j+1}=\dots=x_m^0=0$. 
	More precisely $n_1$ is the number of variables that are differentiated, namely $\{x_j, x_{j1}, x_{\ell j}, x_{j s}: j<\ell \leq m, 1<s<j \}$, $k=m-2$ and $n_2$ is the number of remaining variables.
	
	Equation \eqref{eq:distribuzionale} then becomes
	\begin{equation}\label{eq:distribuzionale2}
	\begin{split}
	\int\left( -\psi(0, \dots, 0, x_j,0,\dots, 0, y)\partial_{x_j}\phi(x_j,y)+\frac{\psi^2}{2}(0,\dots,0,x_j,0,\dots, 0, y)\partial_{y_{j1}}\phi(x_j,y)\right)\de x_j \de y\\ =\int\omega_j(0,\dots,0,x_j,0,\dots,0,y)\phi(x_j,y)\de x_j\de y,	
	\end{split}
	\end{equation}
	for any $\phi\in C_c^\infty(\widetilde \Omega)$, where $\widetilde\Omega\coloneqq \{(x_j,y)\in \R\times \R^{m(m-1)/2}: (0,\dots,0, x_j,0\dots, 0, y)\in \Omega\}$. Let us apply again \cref{lem:dimensionalreduction} with $n_1=2$ being the number of variables along which the test $\phi$ is differentiated in \eqref{eq:distribuzionale2}, namely $x_j$ and $y_{j1}$, $n_2=0$, and $k$ being the number of the remaining variables $y_{s\ell}$ with $\ell<s\leq m$ and $(s,\ell)\neq(j,1)$. We apply the Lemma at values $y_{s\ell}^0=0$ and we get
	\begin{equation}\label{eq:distribuzionaleridotta}
	\begin{split}
	\int\left( -\widehat\psi(x_j,y_{j1})\partial_{x_j}\phi(x_j,y_{j1})+\frac{\widehat\psi^2}{2}(x_j,y_{j1}) \partial_{y_{j1}}\phi(x_j,y_{j1})\right)\de x_j\de y_{j1}\\=\int\widehat\omega_j(x_j,y_{j1})\phi(x_j,y_{j1})\de x_j\de y_{j1},
	\end{split}
	\end{equation}
	for every $\phi\in C_c^\infty(\widehat \Omega)$ with $\widehat\Omega\coloneqq\{(x_j,y_{j1})\in \R^2: (0,\dots,0,x_j,0,\dots,0,y_{j1},0,\dots,0)\in \Omega\}$, where 
	\begin{equation}\label{eqn:WhoHat}
	\begin{aligned}
	\widehat \psi(x_j,y_{j1})&\coloneqq\psi(0,\dots,0,x_j,0,\dots,0,y_{j1},0,\dots,0),\\
	\widehat\omega_j(x_j,
	 y_{j1})&\coloneqq\omega_j(0,\dots,0,x_j,0,\dots,0,y_{j1},0,\dots,0).
	 \end{aligned}
	 \end{equation}
	 Observe that the dimensional reduction shown above can be performed in the same way to the distributional equation $ D^{\psi_{q}}\psi_{q}=\omega\circ P_{q^{-1}}$ in the set $P_{q}(\Omega)$, where for any $w\in \Omega$ we have set $q=(w\cdot\psi(w))^{-1}$ and the map $\psi_q$ is defined in \cref{def:PhiQ}. Indeed in this case, if $w\in \Omega$, then by item (c) of \cref{prop:PropertiesOfIntrinsicTranslation} we have $0\in P_q(\Omega)$ and, thanks to item (d) of \cref{prop:PropertiesOfIntrinsicTranslation}, $\psi_q(0)=0$.
	 
	 (a)  It is sufficient to show that, for any $2\leq j\leq m$, there exists a constant $C>0$ such that 
	 \[
	 |\psi(\gamma(T))-\psi(\gamma(0))| \ \leq CT,
	 \]
	 whenever $\gamma\colon[0,T]\to \Omega$ is an integral curve of $D^\psi_j$ with $\gamma(0)=w$, where $w$ is any point in $\Omega$.
	 
	 Fix $2\leq j\leq m$. We first assume $0\in \Omega$ with $\psi(0)=0$ and we consider an integral curve $\gamma\colon[0,T]\to \Omega$ of $D^\psi_j$ such that $\gamma(0)=0$. 
	 Taking \eqref{operatoriproiettatiinF} and \eqref{eq:ODEverticali} into account, we can explicitly write all the components of $\gamma(t)$ as follows:
	  \[
	  \begin{cases}
	  \gamma_j(t)=t,\\
	  \gamma_{j1}(t)=-\int_0^t\widehat \psi(\tau,\gamma_{j1}(\tau))\de\tau,\\
	  \gamma_{i}(t)=0,& \forall i=1,\dots, m, i\neq j,\\
	  \gamma_{\ell s}(t)=0,& \forall (\ell, s) \text{ with $1\leq s<\ell\leq m$ and $(\ell,s)\neq (j,1)$},
	  \end{cases}
	  \]
	  where $\widehat \psi$ is defined in \eqref{eqn:WhoHat}. We can thus define $\widehat \gamma\colon [0,T]\to \R^2$ by letting  $\widehat \gamma(t)\coloneqq (t, \gamma_{j1}(t))$. By the same choice of test functions given in \cite[Eq. (2.5) and (2.6)]{Dafermos}, from \eqref{eq:distribuzionaleridotta} we can derive
	  \begin{equation}\label{eq:stimadafermos}
	  \begin{aligned}
	  \int_{\gamma_{j1}(T)-\epsilon}^{\gamma_{j1}(T)}\widehat \psi(T,x)\de x&-\int_{-\epsilon}^0\widehat\psi(0,x)\de x-\int_0^T\int_{\gamma_{j1}(t)-\epsilon}^{\gamma_{j1}(t)}\widehat \omega_j(t,x)\de x\de t\\&=-\frac 12\int_0^T\left(\widehat \psi(t, \gamma_{j1}(t)-\epsilon)-\widehat \psi (t,\gamma_{j1}(t)) \right)^2\de t,
	  \end{aligned}
	  \end{equation}
	  for every sufficiently small $\epsilon>0$, see \cite[Eq. (3.4)]{Dafermos} with the choice $g=\widehat \omega_j$, $u=\widehat \psi$, $\sigma=0$, $\tau=T$, $\xi=\gamma_{j1}$ and the change of sign of the right hand side with respect to the reference comes by the fact that in our case $f(u)=-\frac 12 u^2$ instead of $f(u)=\frac 12 u^2$. Since the right hand side of \eqref{eq:stimadafermos} is negative, we can write
	  \[
	  \int_{\gamma_{j1}(T)-\epsilon}^{\gamma_{j1}(T)}\widehat \psi(T,x)\de x-\int_{-\epsilon}^0\widehat\psi(0,x)\de x\leq\int_0^T\int_{\gamma_{j1}(t)-\epsilon}^{\gamma_{j1}(t)}\widehat \omega_j(t,x)\de x\de t \leq \epsilon\|\omega_j\|_{L^\infty(\Omega)} T.
	  \]
	  Dividing both sides by $\epsilon$ and letting $\epsilon \to0$, by the continuity of $\psi$ we get
	  \begin{equation}\label{eq:stimaalta}
	  \widehat \psi(T,\gamma_{j1}(T))-\widehat \psi(0,0)\leq \|\omega_j\|_{L^\infty(\Omega)}T.
	  \end{equation}
	  Similarly, by mimicking \cite[Eq. (3.5)]{Dafermos} we can write for every sufficiently small $\epsilon>0$ the equation
	  \[
	  \begin{aligned}
	  \int_{\gamma_{j1}(T)}^{\gamma_{j1}(T)+\epsilon}\widehat \psi(T,x)\de x&-\int_{0}^\epsilon\widehat\psi(0,x)\de x-\int_0^T\int_{\gamma_{j1}(t)}^{\gamma_{j1}(t)+\epsilon}\widehat \omega_j(t,x)\de x\de t\\&=\frac 12\int_0^T\left(\widehat \psi(t, \gamma_{j1}(t)+\epsilon)-\widehat \psi (t,\gamma_{j1}(t)) \right)^2\de t.
	  \end{aligned}
	  \]
	  Noticing that the right hand side is positive one gets
	  \[
	  \int_{\gamma_{j1}(T)}^{\gamma_{j1}(T)+\epsilon}\widehat \psi(T,x)\de x-\int_{0}^\epsilon\widehat\psi(0,x)\de x\geq\int_0^T\int_{\gamma_{j1}(t)}^{\gamma_{j1}(t)+\epsilon}\widehat \omega_j(t,x)\de x\de t \geq -\epsilon\|\omega_j\|_{L^\infty(\Omega)} T.
	  \]
	  Dividing both sides by $\epsilon$ and letting $\epsilon\to 0$ we get
	  \begin{equation}\label{eq:stimabassa}
	  \widehat \psi(T,\gamma_{j1}(T))-\widehat \psi(0,0)\geq -\|\omega_j\|_{L^\infty(\Omega)}T.
	  \end{equation}
	  Combining \eqref{eq:stimaalta} and \eqref{eq:stimabassa} we finally obtain
	  \[
	  |\widehat\psi(T,\gamma_{j1}(T))-\widehat \psi(0,0)|=|\widehat \psi(\widehat \gamma(T))-\widehat \psi(\widehat \gamma(0))|=|\psi\circ \gamma(T)-\psi\circ\gamma(0)|\leq \|\omega_j\|_{L^\infty(\Omega)}T,
	  \]
	  for any integral curve $\gamma\colon[0,T]\to \Omega$ of $D^\psi_j$ with $\gamma(0)=0$.
	  
	  For the general case, assume $w\in\Omega$ and let $\gamma\colon[0,T]\to \Omega$ be an integral curve of $D^\psi_j$ with $\gamma(0)=w$. Setting $q\coloneqq (w\cdot \psi(w))^{-1}$, by \cref{lem:Invariance2IntegralCurve} and in particular \eqref{eqn:Invariance}, there exists an integral curve $\gamma_q\colon[0,T]\to P_{q}(\Omega)$ of $D^{\psi_{q}}_j$ such that $\gamma_q(0)=0$ and
	  \begin{equation}\label{eq:altrainvarianza}
	  \psi_{q}(\gamma_q(t))-\psi_{q}(\gamma_q(0))=\psi(\gamma(t))-\psi(\gamma(0)),\quad \forall t\in [0,T].
	  \end{equation}
	  We also know by \cref{prop:translationinvariance} that $D_j^{\psi_{q}}\psi_{q}=\omega_j\circ P_{q^{-1}}$ in the distributional sense in $P_{q}(\Omega)$. Since $w\in \Omega$, then $0\in P_{q}(\Omega)$ and $\psi_q(0)=0$, see items (c) and (d) of \cref{prop:PropertiesOfIntrinsicTranslation}. We can therefore run the same argument used in the preliminary dimensional reduction and the first part of (a) to $\psi_{q}$, $P_{q}(\Omega)$, $\gamma_q$ and $\omega_j \circ P_{q^{-1}}$, to get that
	  \[
	  |\psi_{q}\circ\gamma_q(T)-\psi_{q}\circ\gamma_q(0)|\leq \|\omega_j\circ P_{q^{-1}}\|_{L^\infty(P_{q}(\Omega))} T.
	  \]
	  The proof of (a) is  complete if we use \eqref{eq:altrainvarianza} and we observe that the Lipschitz constant is uniform by the fact that
	  \[
	  \|\omega_j\|_{L^\infty(\Omega)}=\|\omega_j\circ P_{q^{-1}}\|_{L^\infty(P_{q}(\Omega))}.
	  \]
	  (b) Fix $a_0\in \Omega$, $2\leq j\leq m$ and let $\delta_1>0$ be such that $B(a_0,2\delta_1)\cap \mathbb W\subseteq \Omega$. Up to reducing $\delta_1$, recalling the explicit expression of $P_q$ in \eqref{eq:definizionePq}, we can assume that for every $w\in B(a_0,\delta_1)\cap\mathbb W$ one has $B(0,2\delta_1)\cap\mathbb W\subseteq P_{q}(\Omega)$ where, as before, $q\coloneqq (w\cdot \psi(w))^{-1}$.
	  
	  Let $w\in B(a_0,\delta_1)\cap\mathbb W$. From the fact that $D^{\psi}_j\psi=\omega_j$ in the distributional sense on $\Omega$, we conclude that $D^{\psi_q}_j\psi_q = \omega_j\circ P_{q^{-1}}$ in the distributional sense on $P_q(\Omega)$, where $q:=(w\cdot \psi(w))^{-1}$, see \cref{prop:translationinvariance}. Moreover $0\in P_q(\Omega)$ and $\psi_q(0)=0$, see items (c) and (d) of \cref{prop:PropertiesOfIntrinsicTranslation}. Thus from the preliminary result on the reduction of dimension, see \eqref{eq:distribuzionaleridotta} and \eqref{eqn:WhoHat}, we conclude that $D^{\widehat {\psi_{q}}}_j\widehat {\psi_{q}} = \widehat{\omega_j\circ P_{q^{-1}}} $ holds in the distributional sense on $\widehat {P_q(\Omega)}$. Here we recall that by $D^{\widehat {\psi_{q}}}_j$ we mean the classical Burgers' operator $\partial_j-\widehat {\psi_{q}}\partial_{j1}$ on the open subset $\widehat {P_q(\Omega)}$ of $\mathbb R^2\coloneqq\{(x_j,y_{j1}):x_j,y_{j1}\in\mathbb R\}$.
	  Then we exploit this information and the argument in \cite[Step 1 of proof of Theorem 1.2]{BSC} to find $0<\delta_2<\delta_1$ and a $C^1$-smooth integral curve  $\widehat\gamma\colon[-\delta_2,\delta_2]\to B(0,\delta_1)\cap \widehat {P_q(\Omega)}$ of $D^{\widehat {\psi_{q}}}_j$ such that $ \widehat \gamma(0)=0$ and
	  \begin{equation}\label{eqn:ReducedFTC} \widehat{\psi_{q}}(\widehat\gamma(t))-\widehat{\psi_{q}}(\widehat \gamma(0))=\int_0^t (\widehat{\omega_j\circ P_{q^{-1}}}) \,(\widehat \gamma (s))\,\de s , \quad \forall t\in [-\delta_2,\delta_2].
	  \end{equation}
	  Moreover, by the same argument used in \cite[Step 1 of proof of Theorem 1.2]{BSC}, we can choose $\delta_2\coloneqq\min\{\delta_1/4, \delta_1/(2M_q) \}$, where $M_q\coloneqq\sup_{B(0,2\delta_1)\cap \widehat {P_q(\Omega)}} |\widehat {\psi_{q}}|$. In particular, if $w\in B(a_0,\delta_1)\cap\mathbb W$ and $\delta_1$ is small enough, $M_q$ has a uniform bound depending on the supremum of $\psi$ in some a priori fixed neighborhood of $a_0$, since $\psi_q$ is explicit in terms of $q$, see item (c) of \cref{prop:PropertiesOfIntrinsicTranslation}. As a consequence, up to eventually reducing and fixing $\delta_1$, $\delta_2$ has a positive lower bound independent of $q=(w\cdot\psi(w))^{-1}$, when we allow $w$ to run in $B(a_0,\delta_1)\cap \mathbb W$. We still denote this lower bound with $\delta_2$.
	  
	  Recalling \eqref{eq:ODEverticali} and the first part of this proof, we can write $\widehat \gamma(t)=(t,\gamma_{j1}(t))$ for some $\gamma_{j1}\colon [-\delta_2,\delta_2]\to \R$. For any $w\in B(a_0,\delta_1)\cap\mathbb W$ we can hence define a $w$-dependent $\gamma\colon [-\delta_2,\delta_2]\to B(0,\delta_1)\cap\mathbb W\subseteq P_q(\Omega)$ by letting
	  \begin{equation}\label{eqn:ExprGamma}
	  \gamma(t)\coloneqq (0,\dots, 0, t, 0,\dots,0, \gamma_{j1}(t), 0, \dots, 0),\quad \forall t\in [-\delta_2,\delta_2].
	  \end{equation}
	  Then, since $\gamma(0)=0$, from the particular expression of $D^{\psi_q}_j$, see \eqref{operatoriproiettatiinF} and \eqref{eq:ODEverticali}, and by the fact that $\widehat \gamma$ is an integral curve of $D^{\widehat\psi_q}_j$,
	   we get that $\gamma$ is an integral curve of $D^{\psi_{q}}_j$, and from \eqref{eqn:ExprGamma}, \eqref{eqn:ReducedFTC}, and \eqref{eqn:WhoHat} the following equality holds 
	  \[
	  \psi_{q}(\gamma(t))-\psi_{q}(\gamma(0))=\int_0^t(\omega_j\circ P_{q^{-1}})(\gamma(s)) \,\de s,\quad \forall t\in [-\delta_2,\delta_2].
	  \]
	  Thanks to \cref{lem:Invariance2IntegralCurve} and to item (b) of \cref{prop:PropertiesOfIntrinsicTranslation}, we can translate the integral curve $\gamma$ to an integral curve $\gamma_{q^{-1}}\colon [-\delta_2,\delta_2]\to \Omega$ of $D^{\psi}_j$ with $\gamma_{q^{-1}}(0)=w$, such that, exploiting \eqref{eqn:Invariance}, the following equality holds
	  \[
	  \psi_{q}(\gamma(t))-\psi_{q}(\gamma(0))=\psi(\gamma_{q^{-1}}(t))-\psi(\gamma_{q^{-1}}(0))=\int_0^t \omega_j(\gamma_{q^{-1}}(s))\,\de s,\quad \forall t\in [-\delta_2,\delta_2],
	  \]
	  where the last equality is true since $P_{q^{-1}}\circ\gamma=\gamma_{q^{-1}}$, see \eqref{eqn:PropertyOfGammaTilde} and \eqref{eq:definizionePq}. Thus we have shown that if we fix $a_0\in\Omega$, $j=2,\dots,m$, and $\delta_1$ sufficiently small, we can find $\delta_2$ only depending on $j$, $\omega$, and $\delta_1$ such that for every $w\in B(a_0,\delta_1)$ there exists an integral curve $\gamma\colon[-\delta_2,\delta_2]\to\Omega$ of $D^{\psi}_j$ such that $\gamma(0)=w$ and 
	  \[
	  \psi(\gamma(t))-\psi(\gamma(0))=\int_0^t \omega_j(\gamma(s))\de s, \qquad \forall t\in[-\delta_2,\delta_2].
	  \]
	  By the continuity of $\psi$ this suffices to conclude that $D^{\psi}\psi=\omega$ in the broad* sense on $\Omega$, see \cref{defbroad*}.
\end{proof}

\begin{prop}\label{prop:distribuzionaleinGimplicainF}
Let $\mathbb{G}$ be a Carnot group of step 2, rank $m$ and topological dimension $m+h$, and let $\mathbb W_\G$ and $\mathbb L_\G$ be two complementary subgroups of $\mathbb G$, with $\mathbb L _\G$  one-dimensional. Let $\mathbb F$ be the free Carnot group of step 2, and rank $m$, and choose coordinates on $\mathbb G$ and $\mathbb F$ as explained in \cref{rem:IDENTIFICATION}. Denote with $\mathbb W_{\mathbb F}$ and $\mathbb L_{\mathbb F}$ the complementary subgroups of $\mathbb F$ with $\mathbb L _{\mathbb F}$ one-dimensional such that $\pi(\mathbb W_{\mathbb F})=\mathbb W_\G$ and $\pi(\mathbb L_{\mathbb F})=\mathbb L_\G$, see \eqref{5.2.0}. 
	Let $U$ be an open set in $\sW _\G$ and denote  by ${V} \subseteq\mathbb W_{\mathbb F}$ the open set defined by $  V\coloneqq\pi^{-1}( U)$. Let ${\phi}\colon{U} \to\mathbb L_\G$ be a continuous map and let $ \psi \colon{V} \to\mathbb L_{\mathbb F} $ be the map defined as 
	\[
	{\psi} \coloneqq \pi^{-1} \circ {\phi}  \circ \pi _{|_{ V}}.
	\]
	Assume there exists $\omega\in C(U ; \R^{m-1})$ such that $D^\phi \phi=\omega$ holds in the distributional sense on $U$.  
Then, $\psi$ is a distributional solution to $D^\psi \psi=\omega \circ \pi$ on $V$. 
	
	\end{prop}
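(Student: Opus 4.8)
The plan is to deduce the distributional identity on $\mathbb{F}$ from the one on $\mathbb{G}$ by slicing along the fibres of $\pi$ and applying Fubini's theorem. First, since $\pi$ preserves the first coordinate, see \eqref{proiezione}, the restriction $\pi|_{\mathbb{L}_{\mathbb{F}}}\colon\mathbb{L}_{\mathbb{F}}\to\mathbb{L}_{\mathbb{G}}$ is the identity of $\mathbb{R}$ in the $x_1$-coordinate, so that $\psi=\phi\circ\pi$ as $\mathbb{R}$-valued functions and $V=\pi^{-1}(U)\subseteq\mathbb{W}_{\mathbb{F}}$. Recall from \eqref{operatoriproiettatiinG} and \eqref{operatoriproiettatiinF} that, for $j=2,\dots,m$, $D^{\phi}_{j}=X'_{j}-\sum_{i=1}^{h}b^{(i)}_{j1}\phi\,Y'_{i}$ on $U$ and $D^{\psi}_{j}=X_{j}-\psi\,Y_{j1}$ on $V$, while $\pi_{*}X_{j}=X'_{j}$ and $\pi_{*}Y_{j1}=\pi_{*}[X_{j},X_{1}]=[X'_{j},X'_{1}]=\sum_{i=1}^{h}b^{(i)}_{j1}Y'_{i}$. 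Since $\psi=\phi\circ\pi$, these vector fields are $\pi$-related, i.e.\ $\pi_{*}(D^{\psi}_{j}|_{w})=D^{\phi}_{j}|_{\pi(w)}$ for all $w\in V$. Finally, in the coordinates of \cref{rem:IDENTIFICATION}, $\pi|_{\mathbb{W}_{\mathbb{F}}}$ is the linear surjection $(x_{2},\dots,x_{m},(y_{\ell s}))\mapsto(x_{2},\dots,x_{m},(y^{*}_{i}))$ with $y^{*}_{i}=\sum_{1\le s<\ell\le m}b^{(i)}_{\ell s}y_{\ell s}$, whose kernel has dimension $r:=\tfrac{m(m-1)}{2}-h$.

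Next I would straighten $\pi$. Pick a linear map $\lambda\colon\mathbb{R}^{m(m-1)/2}\to\mathbb{R}^{r}$ whose restriction to $\ker(\pi|_{\mathbb{W}_{\mathbb{F}}})$ is an isomorphism, so that $w=(x,y)\mapsto(x,z,\zeta)$, with $z:=y^{*}$ and $\zeta:=\lambda(y)$, is a linear automorphism of $\mathbb{W}_{\mathbb{F}}\equiv\mathbb{R}^{n-1}$ with constant nonzero Jacobian, in which $\pi$ becomes the plain projection $(x,z,\zeta)\mapsto(x,z)$. In these coordinates $V=U\times\mathbb{R}^{r}$, both $\psi$ and $\omega_{j}\circ\pi$ depend only on $(x,z)$, and inserting $\partial_{y_{\ell s}}=\sum_{i}b^{(i)}_{\ell s}\partial_{z_{i}}+\sum_{a}(\partial\zeta_{a}/\partial y_{\ell s})\partial_{\zeta_{a}}$ into \eqref{operatoriproiettatiinF} gives $D^{\psi}_{j}=\partial_{x_{j}}+\sum_{i=1}^{h}\beta_{i}\,\partial_{z_{i}}+\sum_{a=1}^{r}\gamma_{a}\,\partial_{\zeta_{a}}$, where, using the skew-symmetry of the $\mathcal{B}^{(i)}$ to rewrite $\tfrac{1}{2}\sum_{j<\ell\le m}x_{\ell}b^{(i)}_{\ell j}-\tfrac{1}{2}\sum_{1<s<j}x_{s}b^{(i)}_{js}=-\tfrac{1}{2}\sum_{k=2}^{m}x_{k}b^{(i)}_{jk}$, one gets $\beta_{i}=-b^{(i)}_{j1}\psi-\tfrac{1}{2}\sum_{k=2}^{m}x_{k}b^{(i)}_{jk}$, which is exactly the coefficient of $\partial_{y^{*}_{i}}$ in $D^{\phi}_{j}$, while each $\gamma_{a}$ is an affine function of $\psi$ and of $x_{2},\dots,x_{m}$, in particular independent of $\zeta$. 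As this change of variables transforms $D^{\psi}_{j}$, $\psi$, $\omega_{j}\circ\pi$ and test functions in the natural way and has constant Jacobian, and $D^{\psi}_{j}\psi$ is a well-defined distribution in either chart because the coefficients of $D^{\psi}_{j}$ are polynomial in $\psi$ and the coordinates (cf.\ \cref{defbroad*}), it suffices to verify $D^{\psi}_{j}\psi=\omega_{j}\circ\pi$ distributionally in the $(x,z,\zeta)$-chart, for each $j$.

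Fix $j$ and $\eta\in C^{\infty}_{c}(V)=C^{\infty}_{c}(U\times\mathbb{R}^{r})$, and compute $\langle D^{\psi}_{j}\psi,\eta\rangle$ termwise. For each $a$, writing $\gamma_{a}=c\psi+(\text{affine in }x)$ with $c$ constant, one has $\gamma_{a}\,\partial_{\zeta_{a}}\psi=\tfrac{c}{2}\partial_{\zeta_{a}}(\psi^{2})+(\text{affine in }x)\,\partial_{\zeta_{a}}\psi$, so $\langle\gamma_{a}\,\partial_{\zeta_{a}}\psi,\eta\rangle$ is a combination of integrals $\int_{U\times\mathbb{R}^{r}}F(x,z)\,\partial_{\zeta_{a}}\eta$, which vanish by Fubini because $\int_{\mathbb{R}}\partial_{\zeta_{a}}\eta\,\de\zeta_{a}=0$. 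Integrating by parts in the remaining terms $\langle\partial_{x_{j}}\psi,\eta\rangle+\sum_{i}\langle\beta_{i}\,\partial_{z_{i}}\psi,\eta\rangle$ yields $\int_{U\times\mathbb{R}^{r}}(-\psi\,\partial_{x_{j}}\eta+\sum_{i}\tfrac{b^{(i)}_{j1}}{2}\psi^{2}\,\partial_{z_{i}}\eta+\tfrac{1}{2}\sum_{i}\sum_{k=2}^{m}b^{(i)}_{jk}x_{k}\psi\,\partial_{z_{i}}\eta)$; integrating out $\zeta$ and setting $\widetilde{\eta}(x,z):=\int_{\mathbb{R}^{r}}\eta(x,z,\zeta)\,\de\zeta\in C^{\infty}_{c}(U)$, this equals $\langle D^{\phi}_{j}\phi,\widetilde{\eta}\rangle$, which by assumption equals $\int_{U}\omega_{j}\widetilde{\eta}=\int_{U\times\mathbb{R}^{r}}(\omega_{j}\circ\pi)\,\eta$. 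Hence $\langle D^{\psi}_{j}\psi,\eta\rangle=\int_{V}(\omega_{j}\circ\pi)\,\eta$ for every $\eta$; letting $j$ range over $2,\dots,m$ gives $D^{\psi}\psi=\omega\circ\pi$ on $V$.

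The step I expect to be the main obstacle is the bookkeeping in the second part: verifying, once $\pi$ has been straightened to a projection, that the horizontal coefficients $\beta_{i}$ of $D^{\psi}_{j}$ coincide with those of $D^{\phi}_{j}$ (this is where the skew-symmetry of the structure constants enters) and that the vertical coefficients $\gamma_{a}$ do not depend on $\zeta$, so that the vertical contributions integrate to zero. One must also account for the constant Jacobian of the change of variables and recall, as in \cref{defbroad*}, that $D^{\psi}_{j}\psi$ is a genuine distribution; both are routine, and the rest of the argument is Fubini's theorem together with differentiation under the integral sign.
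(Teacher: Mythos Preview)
Your proof is correct and follows essentially the same route as the paper's: both introduce a linear change of variables on $\mathbb{W}_{\mathbb{F}}$ that straightens $\pi$ into the projection $(x,z,\zeta)\mapsto(x,z)$ (your $\lambda$ is the paper's $\widetilde{M}$), both use the skew-symmetry of the $\mathcal{B}^{(i)}$ to identify the $\partial_{z_i}$-coefficients of $D^{\psi}_j$ with those of $D^{\phi}_j$, and both observe that the $\partial_{\zeta_a}$-coefficients are independent of $\zeta$ so that these contributions vanish after integrating by parts and applying Fubini. The only organizational difference is that the paper applies the hypothesis on $\mathbb{G}$ slice-by-slice in $\hat y^*$ and then integrates, while you first integrate out $\zeta$ to obtain a single test function $\widetilde{\eta}\in C^\infty_c(U)$; the two are equivalent by Fubini.
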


\begin{proof}
	 Fix $j=2,\dots, m$. Let us identify any element in $\G$ with $(x,y^*)$ where $x\in \R^m$ and $y^*\in \R^h$, and let us identify any element in $\mathbb F$ with $(x,y)$ where $x\in\mathbb R^m$ and $y\in\mathbb R^{m(m-1)/2}$. Then, taking \eqref{operatoriproiettatiinG} into account, we have that
	\begin{equation}\label{eq:distribuzionaleinG.1}
	\int_{U}\left(-\phi \frac{\partial \xi}{\partial x_j} +\frac 12\sum_{i=1}^hb_{j1}^{(i)}\phi ^2\frac{\partial \xi}{\partial y^*_{i}}+\frac12\sum_{i=1}^h\sum_{\ell =2}^mx_\ell b^{(i)}_{ j \ell}\phi \frac{\partial \xi}{\partial y^*_{i}}\right)\de x\de y^* =\int_U \omega_j \xi \de x\de y^*,
	\end{equation}
	for every $\xi \in C_c^\infty(U)$.  Hence, by exploiting \eqref{operatoriproiettatiinF}, we would like to show that
		\begin{equation}\label{eq:DISTRibuzionaleesatta}
	\begin{aligned}
	\langle D^{\psi}_j \psi, \widetilde \xi\rangle& := \int_{V}\left(-\psi \frac{\partial \widetilde \xi}{\partial x_j} +\frac 12 \psi ^2\frac{\partial \widetilde \xi}{\partial y_{j1}}-\frac12\sum_{j< \ell \leq m} x_\ell \psi\frac{\partial \widetilde \xi}{\partial y_{\ell j }}+ \frac12 \sum_{1<  s <j} x_s \psi\frac{\partial  \widetilde \xi}{\partial y_{ js }} \right)\de x\de y \\&  = \int_{V} (\omega _j \circ \pi) \widetilde \xi \de x\de y,
	\end{aligned}
	\end{equation}
 for every $\widetilde \xi \in C_c^\infty(V)$. We consider the change of variables in $\sW_{\mathbb F}\equiv\R^{n-1}$ given by
 \begin{equation*} \left(x^*_2,\dots, x^*_m, y_1^*, \dots , y^*_h, \hat y^*_{h+1} , \dots, \hat y^*_{ \frac{m(m-1)}{2} } \right)^\top \coloneqq M \left(x_2,\dots, x_m,y_{21},\dots , y_{m(m-1)}\right)^\top, 
 	\end{equation*}
	being $M$ a matrix of order $n-1$ defined as 
 \begin{equation}\label{changematrix}
 	M\coloneqq
	 \begin{pmatrix}
 	I_{m-1}  & 0 & \dots &  0\\
	0&  b_{21}^{(1)} &\dots&  b_{m(m-1)}^{(1)}     \\
	\vdots & \vdots & \ddots & \vdots   \\
	\vdots &  b_{21}^{(h)} &\dots&  b_{m(m-1)}^{(h)} \\
	0 & & \widetilde  M & \\
 	\end{pmatrix},
 	\end{equation}
where $I_{m-1}$ is the identity matrix of order $m-1$ and $\widetilde M $  is a $(\frac{m(m-1)}{2}-h) \times \frac{m(m-1)}{2} $ matrix such that $M$ is invertible. We denote the elements of $\widetilde M$ by $b_{\ell s}^{(i)}$ with $1\leq s<\ell\leq m$ and $i=h+1,\dots,\frac{m(m-1)}{2}$. Such a matrix $\widetilde M$ exists thanks to the fact that the matrices $\mathcal B^{(1)}, \dots ,\mathcal B^{(h)}$ as in \eqref{5.1.0} are linearly independent and then the matrix
 \begin{equation*}
	 \begin{pmatrix}
 	  b_{21}^{(1)} &\dots&  b_{m(m-1)}^{(1)}     \\
	  \vdots & \ddots & \vdots   \\
	 b_{21}^{(h)} &\dots&  b_{m(m-1)}^{(h)} \\
 	\end{pmatrix},
 	\end{equation*}
has maximum rank equal to $h$. Denote for shortness  $x^*\coloneqq(x_2^*,\dots , x^*_m)$, $y^*\coloneqq(y^*_1,\dots , y^*_h)$ and $\hat y^*\coloneqq (\hat y^*_{h+1},\dots , \hat y^*_{ \frac{m(m-1)}{2}})$.
By \eqref{definizioneesplicitaP} we get that, for every $(0,x_2,\dots,x_m,y_{21},\dots,y_{m(m-1)})\in V$, the following quality holds 
	\begin{equation*}
\begin{split}
 \psi(0,x_2,\dots , x_m, y_{21},\dots , y_{m(m-1)}) & = \phi \left(0,x_2,\dots , x_m, \sum _{1\leq s<\ell \leq m}  b_{\ell s}^{(1)} y_{\ell s},\dots , \sum _{1\leq s<\ell \leq m}  b_{\ell s}^{(h)} y_{\ell s} \right)\\
& = \phi \left(0,x^*, y^* \right).
\end{split}
\end{equation*}
Given $\widetilde \xi \in C^{\infty}_c(V)$, we define $\xi(x^*,y^*,\hat y^*)\coloneqq \widetilde \xi\circ M^{-1}(x^*,y^*,\hat y^*)^\top \in C^\infty _c (M(V))$, and then, by using the chain rule in order to write the partial derivatives of $\widetilde \xi $ with respect to $\xi$, we can write
	\begin{equation}\label{eq:DISTR1.1}
	\begin{aligned}
	\langle D^{\psi}_j \psi, \widetilde \xi\rangle& = \int \de \hat y^* \int \Biggl(-\phi \frac{\partial \xi}{\partial x^*_j} +\frac 12 \phi ^2 \sum_{i=1}^hb_{j1}^{(i)} \frac{\partial \xi}{\partial y^*_{i}}-\frac12\sum_{j< \ell \leq m} \sum_{i=1}^hb_{\ell j}^{(i)} x^*_\ell \phi\frac{\partial  \xi}{\partial y^*_{i }}  \\
	  &\hphantom{\sum\sum\sum\sum\sum\sum\sum\sum} +\frac12 \sum_{1<  s <j} \sum_{i=1}^hb_{js}^{(i)} x^*_s \phi\frac{\partial \xi}{\partial y^*_{i }} \Biggl) \frac{1}{|\mbox{det}(M)|} \de x^*\de y^* \\
	+&   \int   \sum_{i=h+1}^{\frac{m(m-1)}{2}} \left( \frac 12 \phi ^2 b_{j1}^{(i)}-\frac12\sum_{j< \ell \leq m}b_{\ell j}^{(i)} x^*_\ell \phi   + \frac12 \sum_{1<  s <j} b_{js}^{(i)} x^*_s \phi \right)  \frac{\partial \xi}{\partial \hat y^*_{i}}\frac{1}{|\mbox{det}(M)|} \de x^*\de y^* \de \hat y^*  \\
	 =&  \int \de \hat y^* \int \left(-\phi \frac{\partial \xi}{\partial x^*_j} +\frac 12 \phi ^2 \sum_{i=1}^hb_{j1}^{(i)} \frac{\partial \xi}{\partial y^*_{i}}+ \frac12\sum_{k= 2}^m \sum_{i=1}^hb_{ jk}^{(i)} x^*_k\phi\frac{\partial  \xi}{\partial y^*_{i }} \right) \frac{1}{|\mbox{det}(M)|}\de x^*\de y^* , \\
	\end{aligned}
	\end{equation}
where $1/ |\mbox{det}(M)|$ is the determinant of the change of variables; we stress that in the last equality we used the fact that $\mathcal B^{(i)}$ is a skew-symmetric matrix  for every $i=1,\dots, h$, and
\begin{equation*}
\begin{aligned}
& \int   \sum_{i=h+1}^{\frac{m(m-1)}{2}} \left( \frac 12 \phi ^2 b_{j1}^{(i)}-\frac12\sum_{j< \ell \leq m}b_{\ell j}^{(i)} x^*_\ell \phi   + \frac12 \sum_{1<  s <j} b_{js}^{(i)} x^*_s \phi \right)  \frac{\partial \xi}{\partial \hat y^*_{i}}\frac{1}{|\mbox{det}(M)|} \de x^*\de y^* \de \hat y^*\\
 &  =:  \int   \sum_{i=h+1}^{\frac{m(m-1)}{2}} A_i(x^*,y^*) \frac{\partial \xi}{\partial \hat y^*_{i}}\frac{1}{|\mbox{det}(M)|} \de x^*\de y^* \de \hat y^*  =0,
 \end{aligned}
\end{equation*}
 because $\xi \in C^\infty _c (M(V))$, together with the use of Fubini's Theorem and the fact that 
   the terms of $A_i(x^*,y^*)$ only depend on the variables $x^*$ and $y^*$.
   
Since, by construction, the projection of $M(V)$ onto the variables $(x^*,y^*)$ is precisely $\pi(V)=U$, we get that $\xi(\cdot, \hat y_0^*) \in C^{\infty}_c(U)$ for every $\hat y_0^*\in\mathbb R^{m(m-1)/2-h}$, and thus by using \eqref{eq:distribuzionaleinG.1}, Fubini's Theorem and the fact that $x^*=x$ since the change of variable is the identity on the horizontal layer, we get
\begin{equation*}
	\begin{aligned}
	\langle D^{\psi}_j \psi, \widetilde  \xi\rangle& = \int \de \hat y^* \int \omega_j\ \xi\   \frac{1}{|\mbox{det}(M)|} \de x^*\de y^*  =  \int \omega_j \ \xi \ \frac{1}{|\mbox{det}(M)|} \de x^*\de y^* \de \hat y^*. 
	\end{aligned}
	\end{equation*}
	Finally, if we consider the reversed change of variables  $(x_2,\dots, x_m, y_{21}, \dots, y_{m(m-1)} )^\top = M^{-1} (x^*,y^*,\hat y ^*)^\top $
	where $M^{-1}$ is the inverse matrix of $M$, see \eqref{changematrix}, and recalling \eqref{definizioneesplicitaP}, it follows
	\begin{equation}
	\begin{aligned}
	\langle D^{\psi}_j \psi, \widetilde \xi\rangle& =  \int \omega_j \left(x,\sum _{1\leq s<\ell \leq m}  b_{\ell s}^{(1)} y_{\ell s},\dots , \sum _{1\leq s<\ell \leq m}  b_{\ell s}^{(h)} y_{\ell s} \right) \widetilde \xi (x,y) \frac{|\mbox{det}(M)|}{|\mbox{det}(M)|} \de x\de y. \\& =  \int (\omega_j \circ \pi)(x,y) \widetilde \xi (x,y)\de x\de y, 
	\end{aligned}
	\end{equation}
	for every $\widetilde \xi \in C_c^\infty(V)$, where $|\mbox{det}(M)|$ is the determinant of the change of variables. Hence \eqref{eq:DISTRibuzionaleesatta} holds and the proof is complete.
\end{proof}

\begin{prop}\label{broadinGimplicabroadinF}
	Let $\mathbb{G}$ be a Carnot group of step 2, rank $m$ and topological dimension $m+h$ and let $\mathbb W_\G$ and $\mathbb L_\G$ be two complementary subgroups of $\mathbb G$, with $\mathbb L _\G$  one-dimensional. Let $\mathbb F$ be the free Carnot group of step 2, rank $m$ and topological dimension $n$, and choose coordinates on $\mathbb G$ and $\mathbb F$ as explained in \cref{rem:IDENTIFICATION}. Denote by $\mathbb W_{\mathbb F}$ and $\mathbb L_{\mathbb F}$ the complementary subgroups of $\mathbb F$ with $\mathbb L _{\mathbb F}$  one-dimensional such that $\pi(\mathbb W_{\mathbb F})=\mathbb W_\G$ and $\pi(\mathbb L_{\mathbb F})=\mathbb L_\G$, see \eqref{5.2.0}. 
	Let $U$ be an open set in $\sW _{\mathbb G}$ and denote  with ${V} \subseteq\mathbb W_{\mathbb F}$ the open set defined by $  V\coloneqq\pi^{-1}(U)$. Let ${\phi}\colon{U} \to\mathbb L_{\mathbb G}$ be a continuous map and let $ \psi \colon{V} \to\mathbb L_{\mathbb F} $ be the map defined as 
	\[
	{\psi} \coloneqq \pi^{-1} \circ {\phi}  \circ \pi _{|_{ V}}.
	\]
	Assume there exists $\omega\in C(U ; \R^{m-1})$ such that $D^\psi \psi=\omega\circ \pi$ holds in the broad* sense on $V$.  
	Then, $\phi$ is a broad* solution to $D^\phi \phi=\omega$ on $U$. 
\end{prop}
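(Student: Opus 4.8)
The plan is to push the broad* structure of $\mathbb F$ forward to $\mathbb G$ along $\pi$, after checking that $\pi$ intertwines the two families of projected vector fields.

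First I would record two compatibility identities. Since $\pi^{-1}$ is the inverse of the isomorphism $\pi_{|_{\mathbb L_{\mathbb F}}}\colon \mathbb L_{\mathbb F}\to\mathbb L_{\mathbb G}$, the very definition $\psi=\pi^{-1}\circ\phi\circ\pi_{|_V}$ gives $\pi\circ\psi=\phi\circ\pi$ on $V$; moreover, since the splitting $\mathbb G=\mathbb W_{\mathbb G}\cdot\mathbb L_{\mathbb G}$ is unique and $\pi(\mathbb W_{\mathbb F})=\mathbb W_{\mathbb G}$, $\pi(\mathbb L_{\mathbb F})=\mathbb L_{\mathbb G}$, applying $\pi$ to $g=\pi_{\mathbb W_{\mathbb F}}(g)\cdot\pi_{\mathbb L_{\mathbb F}}(g)$ yields $\pi_{\mathbb W_{\mathbb G}}\circ\pi=\pi\circ\pi_{\mathbb W_{\mathbb F}}$ on $\mathbb F$. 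Using these together with $\pi_*(X_j)=X'_j$ from \eqref{eqn:PISTAR}, I would then show that for every $j=2,\dots,m$ the vector fields $D^{\psi}_j$ on $V$ and $D^{\phi}_j$ on $U$ are $\pi_{|_{\mathbb W_{\mathbb F}}}$-related: for $w\in V$ and $f\in C^\infty(\mathbb W_{\mathbb G})$, unwinding \eqref{eqn:DefinitionOfDj} and using $\pi(w)\cdot\phi(\pi(w))=\pi(w\cdot\psi(w))$,
\[
D^{\phi}_{j}|_{\pi(w)}(f)=X'_{j}|_{\pi(w\cdot\psi(w))}\bigl(f\circ\pi_{\mathbb W_{\mathbb G}}\bigr)=X_{j}|_{w\cdot\psi(w)}\bigl(f\circ\pi_{\mathbb W_{\mathbb G}}\circ\pi\bigr)=D^{\psi}_{j}|_{w}\bigl(f\circ\pi_{|_{\mathbb W_{\mathbb F}}}\bigr),
\]
where the middle equality uses the $\pi$-relatedness of $X_j$ and $X'_j$, and the last one uses $\pi_{\mathbb W_{\mathbb G}}\circ\pi=\pi\circ\pi_{\mathbb W_{\mathbb F}}$; alternatively, this identity can be read off directly from the explicit formulas \eqref{operatoriproiettatiinG}, \eqref{operatoriproiettatiinF}, \eqref{definizioneesplicitaP} and the skew-symmetry of the $\mathcal B^{(i)}$. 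In particular, if $\gamma$ is an integral curve of $D^{\psi}_j$, then $\pi\circ\gamma$ is an integral curve of $D^{\phi}_j$.

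Next I would fix a linear section $\iota\colon\mathbb W_{\mathbb G}\to\mathbb W_{\mathbb F}$ of $\pi_{|_{\mathbb W_{\mathbb F}}}$ in exponential coordinates; this exists because $\pi_{|_{\mathbb W_{\mathbb F}}}$ is a linear surjection, the matrix of structure coefficients having rank $h$ (same argument as in the proof of \cref{prop:distribuzionaleinGimplicainF}), and $\iota$ is continuous with $\pi\circ\iota=\mathrm{Id}_{\mathbb W_{\mathbb G}}$. Fix $a_0\in U$ and set $\tilde a_0:=\iota(a_0)\in V$. Applying the broad* hypothesis for $\psi$ at $\tilde a_0$ gives $0<\tilde\delta_2<\tilde\delta_1$ with $\overline{B(\tilde a_0,\tilde\delta_1)}\cap\mathbb W_{\mathbb F}\subseteq V$ and maps $E^{\psi}_j$ as in \cref{defbroad*}; since $U$ is open and $\pi,\iota$ are continuous, after possibly shrinking $\tilde\delta_1,\tilde\delta_2$ (which does not affect the broad* property at $\tilde a_0$) I can choose $0<\delta_2<\delta_1$ with $\overline{B(a_0,\delta_1)}\cap\mathbb W_{\mathbb G}\subseteq U$, $\pi(\overline{B(\tilde a_0,\tilde\delta_1)}\cap\mathbb W_{\mathbb F})\subseteq\overline{B(a_0,\delta_1)}\cap\mathbb W_{\mathbb G}$, $\delta_2\le\tilde\delta_2$, and $\iota(\overline{B(a_0,\delta_2)}\cap\mathbb W_{\mathbb G})\subseteq\overline{B(\tilde a_0,\tilde\delta_2)}\cap\mathbb W_{\mathbb F}$. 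Then I set, for $a\in\overline{B(a_0,\delta_2)}\cap\mathbb W_{\mathbb G}$ and $t\in[-\delta_2,\delta_2]$,
\[
E^{\phi}_j(a,t):=\pi\bigl(E^{\psi}_j(\iota(a),t)\bigr).
\]
Property (a) of \cref{defbroad*} then follows: $E^{\phi}_j(a,\cdot)$ is $C^1$ as a composition of the $C^1$ curve $E^{\psi}_j(\iota(a),\cdot)$ with the smooth map $\pi$, it starts at $\pi(\iota(a))=a$, it is an integral curve of $D^{\phi}_j$ by the $\pi$-relatedness above, and it takes values in $\overline{B(a_0,\delta_1)}\cap\mathbb W_{\mathbb G}$ by the choice of radii. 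For property (b), using $\phi\circ\pi=\pi\circ\psi$, the fact that $\pi_{|_{\mathbb L_{\mathbb F}}}$ is the identity on the $x_1$-coordinate in the coordinates of \cref{rem:IDENTIFICATION}, and $\psi(\iota(a))=\phi(\pi(\iota(a)))=\phi(a)$, the Fundamental Theorem of Calculus for $\psi$ along $E^{\psi}_j(\iota(a),\cdot)$ (with datum $(\omega\circ\pi)_j=\omega_j\circ\pi$) turns into
\[
\phi\bigl(E^{\phi}_j(a,t)\bigr)-\phi(a)=\int_0^t(\omega_j\circ\pi)\bigl(E^{\psi}_j(\iota(a),s)\bigr)\,\de s=\int_0^t\omega_j\bigl(E^{\phi}_j(a,s)\bigr)\,\de s,\qquad\forall\,t\in[-\delta_2,\delta_2].
\]
Since $a_0\in U$ was arbitrary and $\phi$ is continuous, this shows that $D^{\phi}\phi=\omega$ in the broad* sense on $U$.

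The step I expect to be the crux is the $\pi$-relatedness of the projected vector fields together with the correct way to phrase it: $D^{\psi}_j$ and $D^{\phi}_j$ live on spaces of different dimension, so one has to be careful that it is precisely $\pi_{|_{\mathbb W_{\mathbb F}}}$ (and not some other map) that carries one to the other, and the computation relies on the coordinate identifications of \cref{rem:IDENTIFICATION} under which $\phi$ and $\psi$ are ``the same'' along $\mathbb L$. Once that identity is in hand, pushing integral curves forward and transferring the Fundamental Theorem of Calculus are immediate, and the construction of the section $\iota$ together with the choice of the radii is a routine uniformity argument.
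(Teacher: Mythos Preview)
Your proof is correct and rests on the same core idea as the paper's: integral curves of $D^{\psi}_j$ in $\mathbb F$ project under $\pi$ to integral curves of $D^{\phi}_j$ in $\mathbb G$, and then the Fundamental Theorem of Calculus for $\psi$ along such curves pushes forward to the corresponding statement for $\phi$.

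The main difference is in how this $\pi$-relatedness is established and in the level of detail for the broad* construction. The paper argues in coordinates: it writes down the ODE system \eqref{eq:ODEverticali} for the components of an integral curve of $D^{\psi}_j$, applies the explicit linear formula \eqref{definizioneesplicitaP} for $\pi$, differentiates, and checks directly that the result solves the ODE for $D^{\phi}_j$. Your argument is coordinate-free, deducing $\pi$-relatedness from the intertwining identities $\pi_{\mathbb W_{\mathbb G}}\circ\pi=\pi\circ\pi_{\mathbb W_{\mathbb F}}$, $\pi(w\cdot\psi(w))=\pi(w)\cdot\phi(\pi(w))$, and $\pi_*(X_j)=X'_j$, plugged into \eqref{eqn:DefinitionOfDj}. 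This is cleaner and in fact generalizes beyond step $2$: nothing in that computation uses the step-$2$ structure, whereas the paper's ODE check is tailored to the explicit free step-$2$ model. On the other hand, the paper is content to say at the end that the broad* property for $\phi$ follows ``by taking the definition of broad* solution into account''; your explicit choice of a linear section $\iota$, the careful adjustment of the radii $\delta_1,\delta_2,\tilde\delta_1,\tilde\delta_2$, and the formula $E^{\phi}_j(a,t)=\pi(E^{\psi}_j(\iota(a),t))$ fill in exactly the details the paper suppresses, and the reasoning there is sound.
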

\begin{proof}
	In order to give the proof of the statement we first show the following intermediate result: for every $j=2,\dots,m$, every point $q\coloneqq(0,x_2,\dots,x_m,y_{21},\dots,y_{m(m-1)})\in V$, and every integral curve $\gamma\colon[0,T]\to V$ of $D^{\psi}_j$ starting from $q$ we have that $\pi\circ\gamma\colon[0,T]\to U$ is an integral curve of $D^{\phi}_j$ starting from $\pi(q)\eqqcolon(0,x_2,\dots,x_m,y^*_1,\dots,y^*_h)$, see \eqref{proiezione}. Moreover we stress that from \eqref{definizioneesplicitaP} we have $y^*_i  =  \sum _{1\leq s<\ell \leq m}  b_{\ell s}^{(i)} y_{\ell s}$, for all $i=1,\dots, h$.
	
	Take an integral curve $\gamma\colon[0,T]\to U$ of $D^{\psi}_j$ starting from $q$. Then, the components of $\gamma$ satisfy the system of ODEs in \eqref{eq:ODEverticali}. From the explicit expression of the projection in \eqref{definizioneesplicitaP}, we can write the components of $\pi\circ\gamma$ as a linear combination of the components of $\gamma$. Then, exploiting the ODEs in \eqref{eq:ODEverticali}, taking the derivatives of those linear expressions, and by using the definition of $\psi$ in terms of $\phi$ in the statement, one simply obtains that $\pi\circ\gamma\colon[0,T]\to U$ is an integral curve of $D^{\phi}_j$ starting from  $\pi(q)$.
	
	In order to conclude, notice that, from the relation between $\psi$ and $\phi$ in the statement, we obtain the following equivalence 
	\begin{equation}\label{eqn:YEAH}
	\phi(\pi\circ\gamma(t))-\phi(\pi\circ\gamma(0))=\int_0^t\omega_j(\pi\circ \gamma(s))\de s \Leftrightarrow \psi(\gamma(t))-\psi(\gamma(0))=\int_0^t(\omega_j\circ\pi)(\gamma(s))\de s,
	\end{equation} 
	for every integral curve $\gamma\colon[0,T]\to V$ of $D^{\psi}_j$, with $j=2,\dots,m$, and every $t\in [0,T]$. Thus, from the previous observation on the projection of the integral curves and the equivalence \eqref{eqn:YEAH}, we get the thesis by taking the definition of broad* solution in \cref{defbroad*} into account.
\end{proof}

\section{Main theorems}
We are ready to prove the main theorem of this paper, by making use of the invariance results proved in \cref{sec:INV}. The following theorem is a converse of \cite[Corollary 6.15]{ADDDLD}.
\begin{theorem}\label{thm:Fondamentale1}
	Let $\mathbb{G}$ be a Carnot group of step 2 and rank $m$, and let $\mathbb W$ and $\mathbb L$ be two complementary subgroups of $\mathbb G$, with $\mathbb L$ horizontal and one-dimensional. Let $U\subseteq \mathbb W$ be an open set, and let $\phi\colon U\to \mathbb L$ be a continuous function. Choose coordinates on $\mathbb G$ as explained in \cref{sub:step2}, see also \eqref{5.2.0.1}. Assume there exists $\omega \in C(U;\mathbb R^{m-1})$ such that $D^{\phi}\phi=\omega$ holds in the distributional sense on $U$. Then $D^{\phi}\phi=\omega$ holds in the broad* sense on $U$.
\end{theorem}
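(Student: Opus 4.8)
The plan is to run the lifting strategy to free Carnot groups of step $2$ set up in \cref{sec:INV}, chaining together the three propositions proved there. Fix the free Carnot group $\mathbb F$ of step $2$ and rank $m$, and choose coordinates on $\mathbb G$ and $\mathbb F$, together with the complementary subgroups $\mathbb W_{\mathbb F}$, $\mathbb L_{\mathbb F}$ and the projection $\pi\colon\mathbb F\to\mathbb G$, as in \cref{rem:IDENTIFICATION}. Set $V\coloneqq\pi^{-1}(U)$, which is an open subset of $\mathbb W_{\mathbb F}$ since $\pi$ is continuous, and let $\psi\coloneqq\pi^{-1}\circ\phi\circ\pi_{|_V}\colon V\to\mathbb L_{\mathbb F}$, which is continuous because $\phi$ and $\pi$ are.

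First I would invoke \cref{prop:distribuzionaleinGimplicainF}: since $D^\phi\phi=\omega$ holds in the distributional sense on $U$ with $\omega\in C(U;\R^{m-1})$, the lifted map $\psi$ is a distributional solution of $D^\psi\psi=\omega\circ\pi$ on $V$, and the datum $\omega\circ\pi$ belongs to $C(V;\R^{m-1})$ because $\pi$ is continuous. Next, since $\mathbb F$ is a free Carnot group of step $2$ and rank $m$ with $\mathbb L_{\mathbb F}$ one-dimensional, \cref{prop:distrsopraimplicabroadsopra}(b) applies to $\psi$ and yields that $D^\psi\psi=\omega\circ\pi$ holds in the broad* sense on $V$. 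Finally, \cref{broadinGimplicabroadinF} transfers this information back through $\pi$: from $D^\psi\psi=\omega\circ\pi$ in the broad* sense on $V$ we conclude that $\phi$ is a broad* solution of $D^\phi\phi=\omega$ on $U$, which is exactly the claim.

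Thus, modulo the results of \cref{sec:INV}, the proof is a short composition and there is no genuine obstacle left at this stage: all the difficulty has been front-loaded into those propositions. The real obstacle of the whole argument is the proof of \cref{prop:distrsopraimplicabroadsopra}, where---after the preliminary dimensional reduction of \cref{lem:dimensionalreduction} and the translation invariance of \cref{prop:translationinvariance}---one reduces the distributional Burgers' type system on $\mathbb F$ to the scalar distributional Burgers' equation with continuous datum on the first Heisenberg group, and then applies the arguments of Dafermos \cite{Dafermos} and of Bigolin--Serra Cassano \cite{BSC}. A secondary point that must be controlled there is the uniformity, i.e.\ the independence of the base point, of the radii $\delta_1,\delta_2$ appearing in the broad* condition produced in $\mathbb F$; this uniformity, obtained via \cref{prop:translationinvariance} and the explicit form of $P_q$ in \eqref{eqn:ExpressionOfPqw}, is precisely what allows the broad* property to be pushed back to $\mathbb G$ in \cref{broadinGimplicabroadinF}.
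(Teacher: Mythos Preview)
Your proposal is correct and follows exactly the paper's own proof, which simply chains together \cref{prop:distribuzionaleinGimplicainF}, \cref{prop:distrsopraimplicabroadsopra}(b), and \cref{broadinGimplicabroadinF} in the same order you describe. Your additional commentary on where the real work lies (the dimensional reduction and Dafermos/Bigolin--Serra~Cassano argument inside \cref{prop:distrsopraimplicabroadsopra}, and the uniformity of $\delta_1,\delta_2$) is accurate and matches the paper's discussion, though it is not formally part of the proof of this theorem.
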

\begin{proof}
	It directly follows by joining together \cref{prop:distribuzionaleinGimplicainF}, \cref{prop:distrsopraimplicabroadsopra}, and \cref{broadinGimplicabroadinF}.
\end{proof}
By making use of the previous theorem and \cite[Theorem 6.17]{ADDDLD} we obtain the following characterization of $C^1_{\rm H}$-hypersurfaces in Carnot groups of step 2. For the notion of intrinsic differentiabilty we refer the reader to \cite[Definition 2.17]{ADDDLD}, while for the notion of intrinsic gradient we refer the reader to \cite[Definition 2.20 \& Remark 2.21]{ADDDLD}. For the definition of $C^1_{\rm H}$-hypersurface we refer the reader to \cite[Definition 1.6]{FSSC03}. For a detailed account on this notion we refer the reader to the introduction of \cite{ADDDLD} and in particular to \cite[Definition 2.27]{ADDDLD} for the definition of co-horizontal $C^1_{\rm H}$-regular surfaces with complemented tangents.
\begin{theorem}\label{thm:FONDAMENTALE}
	Let $\mathbb{G}$ be a Carnot group of step 2 and rank $m$, and let $\mathbb W$ and $\mathbb L$ be two complementary subgroups of $\mathbb G$, with $\mathbb L$ horizontal and one-dimensional. Let $U\subseteq \mathbb W$ be an open set and let $\phi\colon U\to \mathbb L$ be a continuous function. Choose coordinates on $\mathbb G$ as explained in \cref{sub:step2}, see also \eqref{5.2.0.1}.
	 Then the following conditions are equivalent:
	\begin{itemize}
		\item[(a)] $\mathrm{graph}(\phi)$ is a $C^1_{\rm H}$-hypersurface with tangents complemented by $\mathbb L$;
		\item[(b)] $\phi$ is uniformly intrinsically differentiable on $U$;
		\item[(c)] $\phi$ is intrinsically differentiable on $U$ and its intrinsic gradient is continuous;
		\item[(d)] there exists $\omega\in C(U;  \R ^{m-1} )$ such that, for every $a\in U$, there exist $\delta>0$ and a family of functions $\{\phi_\eps\in C^1(B(a,\delta)):\eps\in (0,1)\}$ such that
		\[
		\lim_{\eps\to0}\phi_\eps=\phi \quad\text{and}\quad\lim_{\eps\to0}D_j^{\phi_\eps}\phi_\eps=\omega _j  \quad\text{in $L^\infty(B(a,\delta))$},
		\]
		for every  $j=2,\dots,m$;
		\item[(e)]  there exists $\omega\in C(U; \R ^{m-1})$ such that $D^\phi \phi=\omega$ holds in the broad sense on $U$;
		\item[(f)]  there exists $\omega \in C(U; \R ^{m-1} )$ such that $D^\phi \phi=\omega$ holds in the broad* sense on $U$;
		\item[(g)] there exists $\omega \in C(U; \R ^{m-1} )$ such that $D^\phi \phi=\omega$ holds in the distributional sense on $U$.
	\end{itemize}
Moreover, if any of the previous holds, $\omega$ is the intrinsic gradient of $\phi$. 
\end{theorem}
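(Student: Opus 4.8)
The plan is to graft the new item (g) onto the equivalence chain already furnished by \cite[Theorem 6.17]{ADDDLD}, recalled here as \cref{thm:MainTheoremADDDLD}. In the coordinates fixed in \cref{sub:step2} that result gives, for free, the equivalence of (a)--(f) together with the identification of the datum $\omega$ with the intrinsic gradient $\nabla^\phi\phi$ whenever one of them holds. So the only task is to insert (g) into this block, which amounts to proving one implication from (a)--(f) to (g) and a converse implication from (g) back into (a)--(f).

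First I would dispatch (g) $\Rightarrow$ (f): this is precisely \cref{thm:Fondamentale1}, obtained via the lifting strategy of \cref{sec:INV} --- lift the distributional equation to the associated free Carnot group by \cref{prop:distribuzionaleinGimplicainF}, upgrade it to the broad* property there by \cref{prop:distrsopraimplicabroadsopra}, and transfer the broad* property back down to $\G$ by \cref{broadinGimplicabroadinF}. For the present theorem this step is thus a bare citation of \cref{thm:Fondamentale1}.

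Next, to close the cycle I would invoke the implication (a) $\Rightarrow$ (g), which is \cite[Proposition 4.10]{ADDDLD}: if $\mathrm{graph}(\phi)$ is a $C^1_{\rm H}$-hypersurface with tangents complemented by $\mathbb L$, then $\nabla^\phi\phi$ exists and is continuous and $D^\phi\phi=\nabla^\phi\phi$ holds in the distributional sense (this is the same result that makes the distribution $(D^\phi_j)\phi$ meaningful, as already noted in \cref{defbroad*}). Combining the equivalence of (a)--(f), the implication (a) $\Rightarrow$ (g), and the implication (g) $\Rightarrow$ (f), all seven conditions are seen to be equivalent.

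For the closing assertion, when (g) holds with some continuous datum $\omega$, the implication (g) $\Rightarrow$ (f) together with \cref{thm:MainTheoremADDDLD} shows $\phi$ is uniformly intrinsically differentiable and, by \cite[Proposition 4.10]{ADDDLD}, that $\nabla^\phi\phi$ is itself a continuous distributional solution of $D^\phi\phi=\nabla^\phi\phi$; since a continuous function admits at most one continuous representative of the distribution $D^\phi\phi$, one concludes $\omega=\nabla^\phi\phi$. I do not expect a substantive obstacle at the level of this theorem: all the analytic weight is carried by \cref{thm:Fondamentale1} and by the imported results of \cite{ADDDLD}; the only point deserving a line of care is exactly this uniqueness of the datum, which is what forces the various $\omega$'s appearing in (d)--(g) to coincide.
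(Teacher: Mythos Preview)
Your proof is correct and essentially the same as the paper's: the equivalence (a)--(f) is imported from \cite[Theorem 6.17]{ADDDLD}, (g)$\Rightarrow$(f) is \cref{thm:Fondamentale1}, and the loop is closed via \cite[Proposition 4.10]{ADDDLD} (the paper cites (b)$\Rightarrow$(g) rather than (a)$\Rightarrow$(g), which is immaterial since (a)$\Leftrightarrow$(b) is already in hand). Your extra line on the uniqueness of the datum $\omega$ is a welcome clarification that the paper leaves implicit.
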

\begin{proof}
	The equivalence between (a),(b),(c),(d),(e), and (f) follows form \cite[Theorem 6.17]{ADDDLD}. The implication (g)$\Rightarrow$(f) follows from \cref{thm:Fondamentale1}. The implication (b)$\Rightarrow$(g) follows from \cite[Item (c) of Proposition 4.10]{ADDDLD}.
\end{proof}
\begin{rem}[Intrinsic normal and area formula]
	We stress that if any of the hypotheses in \cref{thm:FONDAMENTALE} is satisfied, we can write the intrinsic normal to $\mathrm{graph}(\phi)$ and an area formula for $\mathrm{graph}(\phi)$ explicitely in terms of the intrinsic gradient $\omega$, see \cite[Item (d) of Proposition 4.10 and Remark 4.11]{ADDDLD}.
\end{rem}
\begin{rem}[Approximation of a distributional solutions to $D^\phi\phi=\omega$]\label{rem:ApproxDistributional}
	The approximating sequence in item (d) of \cref{thm:FONDAMENTALE} is a priori dependent on the point $a\in U$ we choose. This is true because in order to obtain \cite[Theorem 6.17]{ADDDLD}, from which \cref{thm:FONDAMENTALE} follows, we use \cite[Item (b) of Proposition 4.10]{ADDDLD}, in which the approximating sequence is constructed in a way that is a priori dependent on the point $a\in U$. Nevertheless the upgrade of such approximation from a local one on balls to an approximation on arbitrary compact sets, with sequences of functions that are not dependent on the compact set itself, is very likely to be true in the setting of Carnot groups of step 2 by exploiting the same technique explained in \cite[Remark 4.14]{ADDDLD} and based on \cite{MV}. Since this topic does not fit in this paper we will not treat it here, and it will subject of further investigations. 
\end{rem}
\begin{rem}[Counterexample to \cref{thm:FONDAMENTALE} on the Engel group]\label{rmk:controesempio}
	Consider the Engel group $\mathbb E$, i.e., the Carnot group whose Lie algebra $\mathfrak e$ admits an adapted basis $(X_1,X_2,X_3,X_4)$ such that
	\[
	\mathfrak e\coloneqq\mathrm{span}\{X_1,X_2\}\oplus\mathrm{span}\{X_3\}\oplus\mathrm{span}\{X_4\},
	\]
	where $[X_1,X_2]=X_3$, and $[X_1,X_3]=X_4$. We identify $\mathbb E$ with $\mathbb R^4$ by means of exponential coordinates, and we define the couple of homogeneous complementary subgroups $\mathbb W\coloneqq\{x_1=0\}$, and $\mathbb L\coloneqq\{x_2=x_3=x_4=0\}$ in such coordinates. Then, by explicit computations that can be found in \cite[Section 4.4.1]{Koz15}, we get that, for a continuous function $\phi\colon U\subseteq \mathbb W\to\mathbb L$, with $U$ open, the projected vector fields on $\mathbb W$ are
	\begin{equation}\label{eqn:ProjectedVectorFieldsEngel}
	D_{X_2}^{\phi}=\partial_{x_2}+\phi\partial_{x_3}+\frac{\phi^2}{2}\partial_{x_4}, \quad D_{X_3}^{\phi}=\partial_{x_3}+\phi\partial_{x_4}, \quad D^{\phi}_{X_4}=\partial_{x_4}.
	\end{equation}
	Thus, if we consider the function $\phi(0,x_2,x_3,x_4)\coloneqq x_4^{1/3}$ on $\mathbb W$, we get that $D^{\phi}_{X_2}\phi = \frac 16\partial_{x_4}(\phi^3)=\frac 16$ in the distributional sense on $\mathbb W$. On the other hand $\phi\colon\mathbb W\to \mathbb L$ is not uniformly intrinsically differentiable, since it is not $1/3$-little H\"older continuous along the coordinate $x_4$, see \cref{big3.3.11}, while for a function to be uniformly intrinsically differentiable this is a necessary condition, see \cite[Example 5.3]{ADDDLD} and \cite[(a)$\Rightarrow$(c) of Theorem 4.17]{ADDDLD}. Then we conclude that the chain of equivalences of \cref{thm:FONDAMENTALE} cannot be extended already in the easiest step-3 Carnot group.
	
	Nevertheless we do not know whether \cref{thm:Fondamentale1} holds in some cases beyond the setting of step-2 Carnot groups. In particular we do not know whether \cref{thm:Fondamentale1} holds in the Engel group with the splitting previously discussed. Interesting develpoments in the direction of studying whether distributional solutions to Burgers' type equations with non-convex fluxes are also broad solutions are given in \cite{ABC} and \cite{ABC2}. 
\end{rem}
We conclude with the following H\"older property that happens to be a consequence of $\phi$ being a distributional solution to $D^{\phi}\phi=\omega$ with a continuous datum $\omega$. For the purpose, we here recall the definition of little H\"older continuity.

\begin{defi}[little H\"older functions, \cite{Lunardi}]\label{big3.3.11}
	Let $U\subseteq\R^n$ be an open set. We denote by $h^\alpha (U;\R^k)$ the set of all  \emph{$\alpha$-little  H\"older continuous} functions of order $0<\alpha<1$, i.e., the set of maps  $\phi \in C(U;\R^k)$ satisfying
	\begin{equation}\label{equationluna}
	\lim_{r\to 0} \left(\sup \Biggl\{ \, \frac{|\phi (b')-\phi (b)|}{|b'-b|^{\alpha }} : \, b,b'\in U \, , \, 0<|b'-b| <r \, \Biggl\}\right)=0.
	\end{equation}
\end{defi}

\begin{theorem}\label{thm:holder}
	Let $\mathbb{G}$ be a Carnot group of step 2 and rank $m$, and let $\mathbb W$ and $\mathbb L$ be two complementary subgroups of $\mathbb G$, with $\mathbb L$ horizontal and one-dimensional. Let $U\subseteq \mathbb W$ be an open set and let $\phi\colon U\to \mathbb L$ be a continuous function. Choose coordinates on $\mathbb G$ as explained in \cref{sub:step2}, see also \eqref{5.2.0.1}. If one of the items of \cref{thm:FONDAMENTALE} holds, then $\phi$ is $1/2$-little H\"older continuous along the vertical coordinates.  
\end{theorem}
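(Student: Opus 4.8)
The plan is to deduce the vertical $1/2$-little Hölder regularity from the broad* solution property established in \cref{thm:FONDAMENTALE}, item (f), combined with the Lipschitz estimate along integral curves given by \cref{prop:distrsopraimplicabroadsopra}(a). First I would reduce to the free group $\mathbb F$: as in the proof of \cref{thm:Fondamentale1}, via \cref{prop:distribuzionaleinGimplicainF} the lifted function $\psi=\pi^{-1}\circ\phi\circ\pi$ solves $D^\psi\psi=\omega\circ\pi$ distributionally on $V=\pi^{-1}(U)$, and by \cref{proiezione} the vertical coordinates $y^*_i$ of $\G$ are linear combinations of the vertical coordinates $y_{\ell s}$ of $\mathbb F$; hence it suffices to prove that $\psi$ is $1/2$-little Hölder continuous along each $y_{\ell s}$-direction (a linear bijection on the vertical layer preserves little Hölder continuity up to a constant). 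Fix $1\le s<\ell\le m$. Then in the free group one can move in the pure $y_{\ell s}$-direction by following an integral curve of $D^\psi_\ell$ for a short time and then an integral curve of $D^\psi_{\ell'}$ (or $D^\psi_s$, according to which index produces $y_{\ell s}$ through \eqref{eq:ODEverticali}) in the reverse horizontal direction, so as to bring the horizontal coordinates back to their starting values while the $y_{\ell s}$-coordinate has changed by a prescribed amount; this is exactly the commutator-type construction that realizes vertical displacements as concatenations of two horizontal flows.

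Concretely, fix a point $a_0\in V$ and work near it; from \cref{prop:distrsopraimplicabroadsopra}(a) each $\psi\circ\gamma$ is Lipschitz on integral curves of $D^\psi_j$ with a constant $C_j$ depending only on $j$ and $\omega$. Given a small vertical displacement $\rho$ in the $y_{\ell s}$-direction, I would construct, for $t\sim\sqrt{|\rho|}$, the concatenated curve: flow along $D^\psi_\ell$ for time $t$ starting at $a=(0,x_2,\dots,x_m,y)$, reaching a point $a'$, then flow along the appropriate second operator for time $\pm t$ to return the horizontal part to $(x_2,\dots,x_m)$; from \eqref{eq:ODEverticali} the net change in $y_{\ell s}$ is of order $t^2$ (the term $\dot y_{\ell j}=\tfrac12 x_\ell$ or the analogue), while all other vertical coordinates either return to their original values or change in a controlled way — one has to be a little careful here and possibly add a short third segment in a commuting vertical direction $Y_{\ell' s'}=\partial_{y_{\ell' s'}}$, which costs nothing in $\psi$ since $D^\psi$ in those directions is just $\partial_{y_{\ell's'}}$ and $\psi$ does not depend on the vertical variables in the sense that — wait, $\psi$ does depend on them; but along an integral curve of $D^\psi_{\ell's'}=\partial_{y_{\ell's'}}$ one has $\tfrac{d}{dt}\psi\circ\gamma=\omega_{\ell's'}=0$ is false in general. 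So instead the cleanest route is: choose the two horizontal flows so that the horizontal endpoint returns exactly and the only nonzero vertical increment is in $y_{\ell s}$, which is possible because in $\mathbb F$ the bracket $[X_\ell,X_s]=Y_{\ell s}$ is a single basis vector and the lower-order corrections in \eqref{eq:ODEverticali} can be arranged to cancel by symmetry of forward/backward flow. Then
\[
|\psi(b')-\psi(b)| \le |\psi\circ\gamma_1(t)-\psi\circ\gamma_1(0)| + |\psi\circ\gamma_2(t)-\psi\circ\gamma_2(0)| \le (C_\ell+C_s)\, t \le C\,|\rho|^{1/2}.
\]

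To upgrade this from a Hölder bound to the \emph{little} Hölder (i.e.\ $\lim_{r\to0}$ of the sup quotient is zero), I would note that the relevant Lipschitz constant $C_j$ can actually be taken to be $\|\omega_j\|_{L^\infty(B(a_0,\delta_1)\cap\mathbb W)}$ on a small ball, by \cref{prop:distrsopraimplicabroadsopra}(a) and the uniformity discussed there; wait, that is bounded away from zero in general, so a different argument is needed for the ``little'' part. The correct mechanism is that the net vertical increment along the two-segment curve is not only $O(t^2)$ but has leading coefficient involving the horizontal coordinate $x_\ell$ (or $x_s$), and by choosing the flows symmetrically the increment of $\psi$ is $\int_0^t\omega_\ell(\gamma_1(\tau))\,d\tau+\int_0^{t}\omega_s(\gamma_2(\tau))\,d\tau$; since $\gamma_1(0)=\gamma_2(0)$ up to $O(t)$ and $\omega_\ell(a)+\omega_s(a)$ need not vanish, one instead exploits that the \emph{two} integrals have opposite-sign horizontal velocities and uses continuity of $\omega$ together with $\phi(\gamma_1(t))$ returning near $\phi(\gamma_2(t))$ — more precisely, the displacement in the $y_{\ell s}$ direction of size $\rho$ is realized with $t=\sqrt{2|\rho|/|x_\ell|}$ when $x_\ell\neq 0$, and the increment of $\psi$ is bounded by $\mathrm{osc}(\omega;\,\text{ball of radius }O(t))\cdot t + o(t)$, which is $o(|\rho|^{1/2})$ as $\rho\to0$ by uniform continuity of $\omega$ on compacts; the degenerate locus $x_\ell=0$ is handled by first making a short horizontal move in the $x_\ell$-direction (cost $O(\sqrt\rho)$) — but this again only gives the Hölder bound. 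I expect the genuine obstacle to be exactly this last point: extracting the \emph{little} (rather than plain) Hölder constant, which requires a careful bookkeeping of the error terms in \eqref{eq:ODEverticali} and a quantitative use of the uniform continuity of $\omega$ along the two-segment broad* curves, precisely in the spirit of \cite[Step 1, proof of Theorem 1.2]{BSC} and the Dafermos-type estimates \eqref{eq:stimaalta}--\eqref{eq:stimabassa}; I would model the argument on those estimates, replacing $T$ by the time $t\sim\sqrt{|\rho|}$ and tracking that the right-hand sides carry a factor $\|\omega\|_{L^\infty}$ that can be localized to an arbitrarily small ball.
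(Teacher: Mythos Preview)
The paper's own proof is a one-line citation to \cite[Remark~3.23 \& Theorem~6.12]{ADDDLD}, so there is nothing to compare at the level of argument; what matters is whether your direct construction actually yields the \emph{little} $1/2$-H\"older bound.  It does not, and you have in fact already located the obstacles yourself without resolving them.

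First, the two-segment construction does not do what you claim.  Flowing along $D^\psi_\ell$ changes $x_\ell$; the only operator that can restore $x_\ell$ is $D^\psi_\ell$ itself, so a second segment along a \emph{different} $D^\psi_s$ cannot ``return the horizontal part to $(x_2,\dots,x_m)$''.  The standard remedy is a four-segment commutator box: $D^\psi_\ell$ for time $t$, then $D^\psi_s$ for time $t$, then $D^\psi_\ell$ for time $-t$, then $D^\psi_s$ for time $-t$.  This returns all horizontal coordinates exactly and produces a net $y_{\ell s}$-displacement of size $\sim t^2$, while the increment of $\psi$ is a sum of four integrals $\int\omega_\ell$ and $\int\omega_s$ over curves that are $O(t)$-close to one another; these cancel \emph{pairwise} up to $o(t)$ by uniform continuity of $\omega$ on compacts, which is precisely the mechanism that upgrades plain H\"older to little H\"older.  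Your attempts to squeeze $o(t)$ out of two integrals (localizing $\|\omega\|_{L^\infty}$, etc.) cannot work, because two integrals over distinct curves have no reason to cancel.  Second, the box with $\ell,s\ge 2$ only reaches the directions $y_{\ell s}$ with $s\ge 2$; the coordinates $y_{j1}$ come from $[X_j,X_1]$ with $X_1\in\mathrm{Lie}(\mathbb L)$, and there is no projected field $D^\psi_1$.  For these you need a genuinely different argument (for instance a forward/backward pass along $D^\psi_j$, using $\dot y_{j1}=-\psi$ and the FTC for $\psi$ along the curve to see that the net $y_{j1}$-increment is $\sim t^2$ with an $o(t)$ increment of $\psi$), which your proposal does not supply.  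In short, the skeleton is right, but the concrete construction has to be a four-leg box, the little-H\"older cancellation has to be made explicit via continuity of $\omega$, and the $y_{j1}$-directions require a separate treatment.
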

\begin{proof}
	It is a consequence of \cref{thm:FONDAMENTALE} and \cite[Remark 3.23 \& Theorem 6.12]{ADDDLD}.
\end{proof}

\bibliographystyle{alpha}
\bibliography{BCSCBib}
\end{document}